\newtheorem{prop}{Proposition}[section]
\newtheorem{theorem}{Theorem}[section]
\begin{document}

\title{General order adjusted Edgeworth expansions for generalized
  $t$-tests}
\author{Inna Gerlovina$^{1, 2}$}
\author{Alan E. Hubbard$^1$}
\email{innager@berkeley.edu}
\keywords{Edgeworth expansions, $t$-statistic, inference, higher-order
  approximations.}
\subjclass{Primary 62E20; secondary 60F05, 60E10, 68W30, 41A60}
\maketitle

\vspace{-3mm}
\begin{center}
  {\small
  $^1$\textit{Division of Biostatistics, University of California,
  Berkeley}\\
$^2$\textit{EPPIcenter, University of California, San Francisco}}
\end{center}

\begin{abstract}
  We develop generalized approach to obtaining Edgeworth expansions 
  for $t$-statistics of an arbitrary order using computer algebra and
  combinatorial algorithms. To incorporate various versions of
  mean-based statistics, we introduce Adjusted Edgeworth expansions
  that allow polynomials in the terms to depend on a sample size in a
  specific way and prove their validity. Provided results up to 5th
  order include one and two-sample ordinary $t$-statistics with biased
  and unbiased variance estimators, Welch $t$-test, and moderated
  $t$-statistics based on empirical Bayes method, as well as general
  results for any statistic with available moments of the sampling
  distribution. These results are included in a software package that
  aims to reach a broad community of researchers and serve to improve
  inference in a wide variety of analytical procedures; practical
  considerations of using such expansions are discussed. 
\end{abstract}

\section{Introduction}

Higher-order asymptotics, and especially developments based on
Edgeworth expansions (EE), played an important role in statistical
inference for over a century - in particular as a means to obtain more
accurate approximation to the distribution of interest, to gain
understanding and establish properties of methods like bootstrap, 
and to compare different statistical procedures. While interest to
asymptotic expansions has been sustained throughout much of this time,
some advances in statistical theory and methodology brought renewed
attention to EE - such as fundamental theoretical results of
Bhattacharya and Ghosh \cite{bhattacharyaGhosh1978,
  bhattacharyaRao1986book1976} and introduction of bootstrap
\cite{efron1979bootstrap}. More recently, proliferation of massive
amounts of data, often with complicated structure, introduced specific
challenges where higher-order inference procedures could be very
beneficial - for example, small sample size and high-dimensional data
analysis that requires probability estimation in far tail regions as a
consequence of some multiple testing procedure. For these challenges,
EE might offer a promising direction and become a widely used
practical tool. \\ 

Tremendous amount of research has been conducted on validity and
derivation of EE for many tests, classes of estimators, and test
statistics. Among them, to name just a few, are Hotelling $T^2$ test
\cite{kano1995, fujikoshi1997}, linear and non-linear regression
models \cite{qumsiyeh1990, lahiri1996}, Cox regression model
\cite{gu1992}, linear rank statistics \cite{albersBickel1976, bickelZwet1978,
skovgaard1986MV, kallenberg1993}, M-estimators \cite{lahiri1996}, and
 U-statistics \cite{bickel1986edgeworth, callaert1980,
   helmers1991}. Expansions have been used for Bayesian methods 
 (e.g. posterior densities) \cite{kolassa2020bayes}, random trees
 \cite{kabluchko2017trees}, permutation tests \cite{yang2019perm}, and
  sampling procedures \cite{yousef2014}. They have been developed for   
various dependent data structures: Markov chains
\cite{bertail2004markov}, martingales \cite{mykland1993},
autoregression and ARMA processes
\cite{taniguchi1987,kakizawa1999AR,mikusheva2015auto}. Some papers
focus specifically on multivariate analysis
\cite{skovgaard1986MV,fujikoshi1997,lahiri1996,andersonHall1998HD}. Applications
of EE range from physics, astronomy, and signal processing to finance,
differential privacy, design optimization, and survey sampling. \\

Research establishing validity and theoretical properties of
asymptotic expansions, starting with Cramer
\cite{cramer1928composition}, has been the basis for developing
EE. Classical Edgeworth expansion theory regarded a sum of independent
identically distributed variables - standardized sample mean
(i.e. scaled by its known standard deviation). This was followed by
work of Petrov \cite{petrov2012book} that proved the results for sums
of independent but not necessarily identically distributed random
variables; later research extended EE to sums of independent and
somewhat dependent random variables,
e.g. \cite{skovgaard1986MV}. However, in order to use EE as an  
inferential tool, expansions for studentized, not standardized,
statistics are needed as the variance is not normally known in 
practice and needs to be estimated - with $t$-statistic being the most
important and commonly used one. First expansions for a studentized
mean were derived by Chung \cite{chung1946} and included a fourth
order (3-term) expansion. Groundbreaking research by Bhattacharya and
Ghosh \cite{bhattacharyaGhosh1978} proved the validity of EE for any
multivariate asymptotically linear estimator in a general case. Their
moment conditions for studentized mean required finite $2 (k + 2)$
moments for a $k$-term expansion. Next important development for
$t$-statistic happened in 1987 when P. Hall introduced a special
streamlined way of deriving EE specifically for an ordinary
$t$-statistic, obtaining an explicit 2-term expansion \cite{hall1987edgeworth,
hall2013book}. He proved the validity of a $k$-term EE for
minimal moment conditions: $k + 2$ finite moments, which is exactly
the number of moments needed for the expansion, with a non-singularity
condition on an original distribution. Work that followed was
concerned with less resrictive (and later optimal) smoothness
conditions in various cases as well as results on Cramer condition 
\cite{bloznelis1999, baiRao1991, babu1993} and different dependence
conditions (most generally in \cite{lahiri2010}). \\

For many statistics and more general classes/groups of estimators, EE
are presented in a general form, often in terms of cumulants of the
distribution of the estimator or some intermediate statistics. As
such, they are not immediately adaptable for practical implementation,
which would require additional steps. These steps can include further
analytical processing, numerical methods such as numerical
differentiation, or estimation of the cumulants of sampling
distribution with the help of resampling methods such as jackknife
\cite{putter1998empirical} and Monte Carlo simulation
\cite{hall1999MC}. Conversely, expansions for $t$-statistic presented 
by P. Hall \cite{hall1987edgeworth} are expressed in terms of
cumulants of the original distribution (equal to standardized
cumulants since unit variance is assumed) and standard normal
p.d.f. This is the classical form of EE for the sum/mean; as exact
algebraic expressions, such ready-to-use expansions can be
incorporated into statistical analysis directly. \\  

In statistical inference, variance estimation is crucial, which makes it
a focus of various methods designed for specific assumptions and data
structures. By generalizing this part of studentized mean-based
statistics, we can provide higher-order inference to many data
analysis scenarios. Some common examples are na\"ive biased and 
efficient unbiased estimators, multi-sample estimators with and
without equality assumption, and shrinkage estimators.
When sample size $n$ is small or moderate, the difference between
unbiased $s^2_{unb} = (n - 1)^{-1}\sum_{i = 1}^n {\left(X_i - \bar{X}
\right)}^2$ and biased $s_b^2 = n^{-1}\sum_{i = 1}^n {\left( X_i -
\bar{X} \right)}^2$ variance estimates is not
negligible. Historically, most expansions for $t$-statistics were 
developed for the biased estimator; Chung \cite{chung1946} 
mentions the unbiased version before switching to the biased one ``for
brevity'', Hendriks at al \cite{hendriksKlaassen2006} consider $s_{unb}^2$
and suggest an approximated correction for it based on Taylor
expansion. With generalized one- and two-sample EE, we are able to
incorporate all of these variants including pooled variance for a
two-sample $t$-statistic and posterior variance used in moderated
$t$ based on empirical Bayes method \cite{smyth2004statistical}, which
provides more stable inference in high-dimensional data analysis,
especially when the sample size is small. \\

To incorporate various estimators into a generalized framework and
to simplify results and make them readily available for practical use, we
propose adjusted Edgeworth expansions (AEE) that allow certain sample
size dependent coefficients to stay unexpanded throughout the process of
derivation and carry through to the results - and prove AEE's validity
as asymptotic expansions. We derive closed form general order
expressions for moments of sampling distribution; using these
expressions, software algorithms, and computer algebra, arbitrary
order expansions can be generated and used for practical
applications. Throughout the paper, we adopt the terminology of ($k +
1$)'th order or $k$-term expansion, where normal approximation is a
zero term. In most of the literature, expansions are derived up to the
second or third order (Chung presents $3$-term or fourth order
expansion for an ordinary one-sample $t$-statistic). With small
samples and distributions that are far enough from Gaussian,
especially highly skewed distributions, closer approximations and
terms beyond second or third order might be desirable. Other benefits
of having subsequent terms include insights into the error of the
approximation or comparisons between different procedures based on
lower order approximations \cite{bickel1974review}. For a general
order EE for standardized mean, which is the original classical case
for EE (sum of independent random variables), Blinnikov et al
\cite{blinnikov1998} proposed a software algorithm and calculated $12$
terms; such expansions also fit into our generalized version as a
special case. We provide results up to fifth order for one- and
two-sample $t$-tests
(\href{https://github.com/innager/AEE}{Supplementary materials} and R
package \href{https://github.com/innager/edgee}{\textit{edgee}}
\cite{packageEdgee}); $4$-term AEE for the simplest case of one-sample
ordinary $t$-statistic is presented in the main text. \\

This paper is organized as follows: section \ref{sec:roadmap} outlines
a roadmap to derive the expansions and introduces AEE; it is followed
by one- and two-sample expressions for general order moments of
sampling distributions (first step in the roadmap). Section
\ref{sec:theorem} establishes validity of AEE; in section
\ref{sec:results}, we provide general results along with examples of
specific cases including ordinary one- and two-sample
$t$-statistics, Welch $t$-test, and moderated $t$-statistics
calculated with posterior variance in high-demensional data
analysis. Illustrations for higher-order approximations based on
expansions of different orders are provided in section
\ref{sec:illustr}. We conclude with a discussion on specific features 
of AEE for studentized means and considerations for their practical
applications.   \\

\section{Adjusted Edgeworth Expansions} \label{sec:roadmap}  

With the goal of generating an arbitrary order explicit expansions
expressed in terms of cumulants or central moments of the data
generating distribution for a generalized mean-based statistic, we
review and modify the steps of the roadmap for obtaining EE. In
general, for some test statistic $\hat{\theta}$, these steps include
Taylor expansion of $\hat{\theta}$'s characteristic function,
collecting the terms according to the powers of sample size $n$,
truncating the expression to the desired order, and using Hermite
polynomials to get EE through inverse Fourier transform. Two steps
in particular are the focus of our approach: 1. the first step in the
process, deriving cumulants of the sampling distribution, which is the
part tailored to the specific test statistic, and 2. collecting the
terms by the powers of $n$ with subsequent truncation. Since the
ultimate goal of this work is producing higher-order expansions
suitable for practical applications in a wide class of statistical tests,
considerations of manageability of results and feasibility of derivations
play an important role in this approach.  \\ 

Coefficients in EE get progressively longer and harder to
obtain with each additional term. Prior to the use of computer
algebra, expansions of very limited orders have been derived in their
explicit form - even for a basic statistic such as sample
average. Computer algebra and software algorithms allow one to handle
long calculations and generate expressions for high orders that were
previously challenging and prone to human errors. Moreover, generated  
results can be used in data analysis directly as source code, further 
automating the process. Most of the steps following the initial
derivation of cumulants of the sampling distribution are
straightforward; the step with collecting and truncating terms with
respect to sample size $n$, however, deserves special attention and
will be addressed separately. \\ 

Let $X_1, \dotsc, X_n$ be a sample of $n$ i.i.d. random variables with
central moments $\mu_j$ and let $\hat{\theta}$ be some
normalized test statistic with
c.d.f. $F_{\hat{\theta}}(\cdot)$. Consider a $K$-term Edgeworth
expansion $F_{n, K}$ of $F_{\hat{\theta}}$:     
\begin{equation*} 
 F_{n, K}(x) = \Phi(x) + n^{-\frac{1}{2}} q_1(x)\phi(x) + n^{-1}q_2(x) \phi(x)
+ \dotsb + n^{-\frac{K}{2}} q_K(x) \phi(x), 
\end{equation*}
where polynomials $q_i(x)$ are written in terms of $\mu_j$ or
standardized cumulants $\lambda_j$ and do not depend on $n$;
$\Phi(\cdot)$ and $\phi(\cdot)$ denote standard normal c.d.f. and
p.d.f. respectively. For a two-sample or multiple-sample test
statistic, this expression should either be modified to incorporate 
sample sizes $n_1, n_2, \dotsc$, or some summary measure $n$ can be
used to conform to the above representation. \\

Sample size $n$ is a key component in all variance estimators; viewed
as a function of $n$, each estimator has a different functional
form. In order to obtain EE for a generalized $t$-statistic, we need
to come up with a form that would encompass many of the useful
estimators. This approach, however, would become an obstacle for
creating a power series in $n^{-1/2}$ - a ``collecting'' step, which calls
for arranging the terms based on powers of $n$. On the other hand,
if we were to attempt obtaining EE for each individual case (without
generalization), all the versions except the na\"ive biased variance
estimator $s_b^2 = n^{-1} \sum_{i = 1}^n (X_i - \bar{X})^2$ would yield
such prohibitively complicated coefficients in this power series that
both derivation and use would become unfeasible after the first few
orders. \\

To address the challenges posed above, we introduce adjusted Edgeworth
expansions (AEE) that would simplify the results and allow a
generalized solution for different kinds of $t$-tests (or other types
of studentized statistic). For a generalized variance estimator,
consider a set of coefficients that depend on $n$ and satisfy certain
order conditions (e.g. $C = \sum_{i = 0}^{\infty} c_i \,
n^{-i}$) but whose functional form is specific to the estimator. In
AEE, the collecting and truncating steps will leave these coefficients
intact, carrying them over to the results, which thus remain
generalized. Leaving the coefficients unexpanded (as functions of $n$)
leads to their presence in the characteristic function and therefore
requires a subsequent adjustment to the inverse Fourier transform
step. In that step, the term $(it)^2/2$ that in classic EE becomes a
standard normal c.d.f. in expansion's zero term (recall that
$\hat{\theta}$ is a normalized statistic) now aquires another factor,
which means that a normal c.d.f. in zero term is no longer standard -
and that its variance depends on $n$. Let $r^2$ denote this factor and
call it variance adjustment; $r^2 \to 1$ as $n \to \infty$. Then for a
term $k$ we get   
\begin{equation*} 
 (-it)^k e^{-\frac{1}{2} t^2 r^2} = \frac{1}{r^k}
 \int_{-\infty}^{\infty} e^{itx} \phi^{(k)} \left(\frac{x}{r} \right)
 dx 
   = (-1)^k \, \frac{1}{r^k} \int_{-\infty}^{\infty} e^{itx} \, He_k
   \left(\frac{x}{r} \right) \phi \left(\frac{x}{r} \right) dx, 
\end{equation*}
where $\phi^{(k)} \! \left(\frac{x}{r} \right) = \frac{d^k}{dy^k} \,
\phi (y) \Big|_{y = \frac{x}{r}}$ and $He_k(x) = (-1)^k \,
e^{\frac{x^2}{2}} \dfrac{d^k}{dx^k} \, e^{-\frac{x^2}{2}}$ are Hermite
polynomials. Therefore in inverse Fourier transform $r^{-k} \, He_{k -1 }
\left(\frac{x}{r} \right)$ will be substituted for $(it)^k$.  \\

It follows that AEE can themselves be viewed as a generalization on
EE, with coefficients for classic EE being constants (not depending on
$n$). Let 
\begin{equation} \label{eq:AEE}
 \tilde{F}_{n, K}(x) = \Phi\left(\frac{x}{r}\right) +
 n^{-\frac{1}{2}} \, q_1(x; r) \, \phi\left(\frac{x}{r}\right) + \dotsb +
 n^{-\frac{K}{2}} \, q_K(x; r) \, \phi\left(\frac{x}{r}\right)
\end{equation}
be a $K$-term AEE of $F_{\hat{\theta}}(x)$. When $r^2 = 1$, it is a classic
EE; that is the case with one-sample $t$-statistic with variance
estimator $s_b^2$ and two-sample statistic for a Welch $t$-test with
na\"ive biased estimators for both samples. Asymptotic expansion
property of general case EE has been long established 
(\cite{bhattacharyaGhosh1978}):
$F_{\hat{\theta}}(x) - F_{n, K}(x) = o \left(n^{K/2}\right)$ but it
does not apply to AEE in general. With specific order conditions that are
satisfied by most mean-based test statistics, we extend this result
and establish validity of AEE for $t$-tests (section \ref{sec:theorem}).  \\

\section{Moments of Sampling Distribution} \label{sec:moments}

The first step in the roadmap - deriving general order closed form
expressions for non-central moments of the sampling distribution, from 
which the cumulants are easily obtained. For a $K$-term EE, only a
limited number of terms in cumulants of a sampling distribution is
used; terms that correspond to orders of $n^{-(K + 1)/2}$ or higher
are truncated. An important consideration for generating these
cumulants is computational efficiency and feasibility of algebraic
manipulation of long expressions - this consideration motivates the
form for the moments that we present, which avoids generating
unnecessary terms in the first place. The framework that generalizes
$t$-tests of various kinds introduces variables $A$ and $B$ that
depend on $n$ in a certain way and are specific to the variance
estimators.  \\ 

 Let $X$ be a random variable with $E(X) = 0$ (we can consider a
mean-zero random variable without any loss of generality), variance
$\sigma^2 = \mathcal{O}(1)$, central moments $\mu_j$,
and standardized cumulants $\lambda_j = \kappa_j/\sigma^j$,
where $\kappa_j$ is a $j$'th cumulant; let $X_1, \dotsc, X_n$ be a random
sample as in section \ref{sec:roadmap}. We also use the following
notation: $\bar{X} = n^{-1} \sum_{i = 1}^n X_i = \mathcal{O} \left(
  n^{-1/2} \right)$, $\overline{X^2} = n^{-1} \sum_{i = 1}^n X_i^2 =
\mathcal{O} (1)$, and $\bar{X}_s = n^{-1} \sum_{i = 1}^n 
\big(X_i^2 - \sigma^2 \big) = \overline{X^2} - \sigma^2 = \mathcal{O}
\left( n^{-1/2} \right)$. Let $s^2$ be some estimator of $Var(X)$ that
can be written as $s^2 = A + B(\bar{X}_s - \bar{X}^2)$, where $A > 0$,
$B > 0$; a corresponding estimator of $Var(\bar{X})$ is then
$s^2_{\bar{X}} = s^2/n$. Consider a statistic of the form   
\begin{equation} \label{eq:theta1}
  \hat{\theta} = \frac{\bar{X}}{s_{\bar{X}}} = \frac{\sqrt{n} \,
    \bar{X}}{s} = n^{\frac{1}{2}} \bar{X} \left[ A + B
   \left(\bar{X}_s - \bar{X}^2\right)\right]^{-\frac{1}{2}} 
\end{equation} 

\begin{prop} \label{prop:mom1}
 $m$'th order moments of sampling distribution of $\hat{\theta}$
 defined in \eqref{eq:theta1} are given by
 \begin{align*} 
 \mu_{\hat{\theta}, m} = E \left[ \hat{\theta}^m \right] = n^{\frac{m}{2}}
 A^{-\frac{m}{2}} \Bigg[ \rho(m, 0) + \sum_{k = 1}^K &\sum_{i =
0}^{\left\lfloor \frac{k}{2} \right\rfloor} a_{m, k - i} (-1)^i {k -
i \choose i} \bigg(\frac{B}{A}\bigg)^{k - i} \notag \\
& \; \times \rho(m + 2i, k - 2i)
\Bigg] + \mathcal{O} \left(n^{-\frac{K + 1}{2}} \right),
\end{align*}
where 
 \begin{equation} \label{eq:amk}
 a_{m,k} = \frac{1}{k! \, 2^k} (-1)^k \prod_{j=0}^{k-1} (m + 2 \, j)
\end{equation}
  and $\rho(i, j) = E \left( \bar{X}^i \, \bar{X}_s^j \right)$.
\end{prop}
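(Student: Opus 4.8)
The plan is to expand the negative power in \eqref{eq:theta1} as a power series in the small quantity $\bar X_s - \bar X^2$ and then take expectations term by term. Pulling $A$ out of the bracket gives
\[
\hat\theta^m = n^{m/2} A^{-m/2}\,\bar X^m\bigl(1+u\bigr)^{-m/2},\qquad u := \tfrac{B}{A}\bigl(\bar X_s - \bar X^2\bigr),
\]
and since $\bar X_s=\mathcal{O}(n^{-1/2})$ and $\bar X^2=\mathcal{O}(n^{-1})$ we have $u=\mathcal{O}(n^{-1/2})$. The first thing I would record is the algebraic identity $a_{m,k}=\binom{-m/2}{k}$: writing out $\binom{-m/2}{k}=\frac1{k!}\prod_{j=0}^{k-1}\bigl(-\tfrac m2-j\bigr)$ and pulling a factor $-\tfrac12$ out of each of the $k$ terms reproduces \eqref{eq:amk}. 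Hence the Taylor coefficients of $(1+u)^{-m/2}$ at $u=0$ are exactly the $a_{m,k}$, so formally $(1+u)^{-m/2}=\sum_{\ell\ge0}a_{m,\ell}\,u^\ell$.

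Next I would expand each power by the ordinary binomial theorem, $u^\ell=(B/A)^\ell\sum_{p=0}^{\ell}\binom{\ell}{p}(-1)^p\bar X^{2p}\bar X_s^{\ell-p}$, multiply by $\bar X^m$, and take expectations; by the definition of $\rho$ the $(\ell,p)$ summand becomes $a_{m,\ell}(B/A)^\ell\binom{\ell}{p}(-1)^p\rho(m+2p,\ell-p)$. The substitution $k=\ell+p$, $i=p$ (so $\ell=k-i$, with $0\le i\le\lfloor k/2\rfloor$ forced by $p\le\ell$) carries this double sum onto precisely the double sum in the statement, the $k=0$ term being the isolated $\rho(m,0)$. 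The point of collecting by the total degree $k=\ell+p$ rather than by $\ell$ is that $k$, and not $\ell$, is the quantity that governs the order in $n$.

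That is the content of the order estimate $\rho(i,j)=E(\bar X^i\bar X_s^j)=\mathcal{O}\bigl(n^{-\lceil(i+j)/2\rceil}\bigr)$, which I would prove by writing $\bar X$ and $\bar X_s$ as $n^{-1}$ times sums of i.i.d.\ mean-zero summands ($X_a$ and $X_b^2-\sigma^2$ respectively), expanding the product of the $i+j$ sums, and counting distinct summation indices: a nonvanishing expectation needs every index to occur at least twice, so at most $\lfloor(i+j)/2\rfloor$ indices are free, each contributing one power of $n$ against the $n^{-(i+j)}$ prefactor. Applied to $\rho(m+2i,k-2i)$ this gives order $n^{-\lceil(m+k)/2\rceil}$ \emph{uniformly in} $i$, so every summand with a given $k$ shares one order; after the $n^{m/2}A^{-m/2}$ prefactor the whole $k$-group is $\mathcal{O}(n^{-k/2})$. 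Truncating at $k=K$ therefore discards only groups with $k\ge K+1$, i.e.\ a remainder $\mathcal{O}(n^{-(K+1)/2})$, as claimed; this also dictates that $X$ must possess on the order of $m+2(K+1)$ finite moments for every $\rho$ that appears to be finite.

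The step I expect to be the real obstacle is making this truncation rigorous rather than formal. The binomial series for $(1+u)^{-m/2}$ converges only where $|u|<1$, but $u$ is random and, with small probability, violates this; worse, $(1+u)^{-m/2}$ is singular as $u\to-1$, that is, as $s^2\to0$. I would therefore replace the infinite series by Taylor's theorem with remainder to order $K$, obtaining the explicit finite sum plus an explicit remainder $R_K(u)$, and bound $E[\bar X^m R_K(u)]$ by splitting on $\{|u|\le\tfrac12\}$ and its complement. On the former, $|R_K(u)|\le C|u|^{K+1}$ and the index-counting above controls $E[\bar X^m|u|^{K+1}]$ at the required order; on the latter, positivity of $A$ and $B$ together with the moment hypotheses bound $\bar X^m(1+u)^{-m/2}$ in $L^1$, while a Markov bound on the moments of $u$ makes the probability of $\{|u|>\tfrac12\}$ decay at a polynomial rate that, given enough moments of $X$, is smaller than $n^{-(K+1)/2}$. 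Controlling this tail and the near-singularity simultaneously is the delicate part; the purely algebraic derivation of the first two steps is routine once the coefficient identity and the reindexing are in hand.
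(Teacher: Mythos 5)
Your proposal follows essentially the same route as the paper's proof: expand $(1+u)^{-m/2}$ with coefficients $a_{m,\ell}$, binomially expand $u^{\ell}$, and reindex by the total order $k=\ell+p$ so that each $k$-group contributes $\mathcal{O}\big(n^{-k/2}\big)$ after the $n^{m/2}A^{-m/2}$ prefactor --- exactly the regrouping into $\gamma_1^{k-2i}\gamma_2^{i}$ carried out in Appendix A. The only divergence is that the paper stops at the formal manipulation, truncating on the basis of the stochastic orders $\gamma_1=A^{-1}B\bar{X}_s=\mathcal{O}\big(n^{-1/2}\big)$ and $\gamma_2=A^{-1}B\bar{X}^2=\mathcal{O}\big(n^{-1}\big)$, and never addresses the convergence and near-singularity issues you raise in your final paragraph, so your remainder analysis (and your moment-counting bound on $\rho(i,j)$) is added rigor beyond, rather than a departure from, the paper's argument.
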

 The proof of this Proposition is provided in Appendix \ref{sec:proofProp}. \\

Let
\begin{equation*} 
 \nu(k, l) = E \left[ \bar{X}^k \left( \overline{X^2} \right)^l
 \right] = \frac{1}{n^{k + l}} \sum_{i_1=1}^n \dotsm \sum_{i_k=1}^n
   \sum_{j_1=1}^n \dotsm \sum_{j_l=1}^n E ( X_{i_1} \dotsm  X_{i_k}
   X_{j_1}^2 \dotsm X_{j_l}^2); 
 \end{equation*}
then
\begin{equation*} 
 \rho(i, j) = \sum_{k=0}^j (-1)^k {j \choose k} \sigma^{2k} \nu(i, j-k). 
 \end{equation*}
 $\nu_{k, l}$ is a special case of expectation $E\big(
 \overline{X^{\phantom{.}}}^{j_1} \overline{X^2}^{j_2} \dotsm
 \overline{X^m}^{j_m} \big)$ for an arbitrary $m$, expression for
 which in terms of $\mu_j$ and $n$ can be generated using a
 combinatorial algorithm described in \cite{gerlovina2019moments} with
 an R package \textit{Umoments}~\cite{packageUmoments}. \\

For a $K$-term EE for mean-based statistics,
we need to find $E(\hat{\theta}^m)$, $m = 1, \dots, M$, where $M = K +
2$. For other statistics, that might involve a different number of
moments of the original distribution - for example, $K$-term 
expansion for a sample variance will require $2(K + 2)$
moments/cumulants. Figure \ref{fig:klgrid} shows which $\nu(k, l)$
are needed for different orders of Edgeworth expansions. For
straightforward calculation of $\hat{\theta}^m$ that is based on 
equation \eqref{eq:theta1}, we would expand $\left[ 1 + B/A \, (\bar{X}_s
  - \bar{X}^2) \right]^{-\frac{m}{2}}$ and subsequently substitute
$K$ for $\infty$ in the sum limit (see also \eqref{eq:thetamt1}); the
set $\{(k, l)\}$ required for this approach would be represented by a
rectangle. By rearranging the terms and grouping them with respect to
$n$, we have cut out the area in the bottom right corner; even
though the number of expressions in that corner is comparatively small,
these expressions are much longer than the ones in the rest of the rectange.
For example, excluding
these expressions reduces the time to generate a set of $\nu(k,
  l)$ (shaded area vs rectangular grid) by factors of $100$ for
$K = 4$, $900$ for $K = 5$, and $6000$ for $K = 6$. \\

\begin{SCfigure} 
 \includegraphics[scale=0.48]{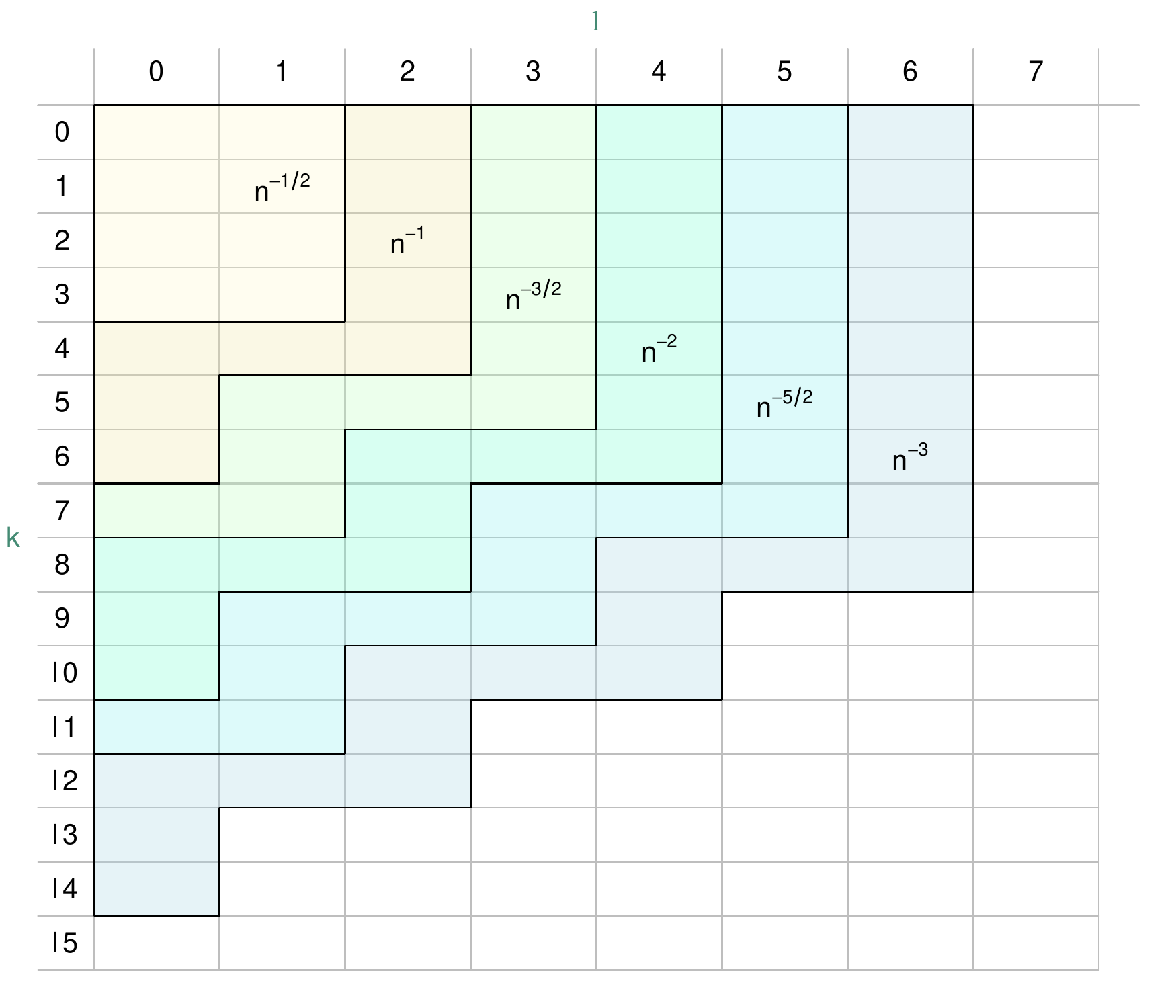}
 \caption{The grid showing which $\nu(k, l)$ need to be caculated for
   various terms of Edgeworth expansion for a $t$-statistic, with
   terms indicated as powers of $n$. Combinations of $k$ (rows) and
   $l$ (columns) needed for a particular term also include all the
   combinations needed for previous terms as well.}
 \label{fig:klgrid}
\end{SCfigure}

\begin{sloppypar}
For a generalized two-sample $t$-test, consider mean-zero random
variables $X$ and $Y$ with variances $\sigma_x^2$ and $\sigma_y^2$
respectively, central moments $\mu_{x, j}$, $\mu_{y, j}$, and a random
sample $X_1, \dotsc, X_{n_x}, Y_1, \dotsc, Y_{n_y}$. Similarly to the
one-sample case, define $\bar{X} = n_x^{-1} \sum_{i = 1}^{n_x} X_i$,
$\bar{Y} = n_y^{-1} \sum_{i = 1}^{n_y}Y_i$, $\bar{X}_s = n_x^{-1} \sum_{i =
  1}^{n_x} (X_i^2 - \sigma_x^2)$, and $\bar{Y}_s = n_y^{-1} \sum_{i =
  1}^{n_y} (Y_i^2 - \sigma_y^2)$. As mentioned in section
\ref{sec:roadmap}, to have a single summary measure representing
sample size and to eliminate $n_x$ and $n_y$ (assuming they are
comparable), we introduce $n = (n_x + n_y)/2$, $b_x = n/n_x$, and $b_y
= n/n_y$. Let $s^2 = A + B_x(\bar{X}_s - \bar{X}^2) + B_y (\bar{Y}_s -
\bar{Y}^2)$ and let $s^2_{\bar{X} - \bar{Y}} = s^2/n$ be some estimator of 
$Var(\bar{X} - \bar{Y})$. In this case there is no immediate
interpretation for $s^2$ but it is a useful construct that is
analogous to the one-sample case. Consider a statistic of the form  
\end{sloppypar}
\begin{equation} \label{eq:theta2}
 \hat{\theta} = \frac{\bar{X} - \bar{Y}}{s_{\bar{X} - \bar{Y}}} =
 \frac{\sqrt{n} (\bar{X} - \bar{Y})}{s} = n^{\frac{1}{2}} (\bar{X} -
 \bar{Y}) \left[A + B_x(\bar{X}_s - \bar{X}^2) + B_y(\bar{Y}_s -
   \bar{Y}^2) \right]^{-\frac{1}{2}} 
\end{equation}

\begin{prop} \label{prop:mom2}
The moments of sampling distribution of $\hat{\theta}$ defined in
 \eqref{eq:theta2} are given by
 \begin{alignat*}{3} 
 E \left(\hat{\theta}^m \right) =  & \, n^{\frac{m}{2}} A^{-\frac{m}{2}}
 \sum_{j=0}^m (-1)^j {m \choose j} \bigg[ \rho(m-j, 0) \, \tau(j, 0)
 \nonumber \\  
  & \, + \sum_{k=1}^K
   \sum_{i=0}^{\left\lfloor \frac{k}{2} \right\rfloor} 
    (-1)^i a_{m, k - i} {k-i \choose i} A^{i-k} \sum_{u=0}^{k-2i} 
    \sum_{v=0}^i  && \! {k - 2i \choose u} \! {i \choose v} B_x^{(k -
      i) - (u + v)} B_y^{u + v} \nonumber \\
 & && \mkern-15mu \times \rho\big(m-j+2(i-v), k-2i-u\big) \,
 \tau(j+2v, u) \bigg] \nonumber \\
 & \, +  \mathcal{O} \left( n^{-\frac{K + 1}{2}} \right),  
\end{alignat*} 
where $a_{m, k}$ is the same as in \eqref{eq:amk}, $\rho(i, j) =
E(\bar{X}^i \bar{X}_s^j)$, and $\tau(i, j) = E(\bar{Y}^i
\bar{Y}_s^j)$. 
\end{prop}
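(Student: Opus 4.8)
The plan is to follow the same strategy as in Proposition~\ref{prop:mom1}, handling the two independent samples in parallel and gluing them together by independence. First I would write $\hat\theta^m$ explicitly from \eqref{eq:theta2}, pulling out the deterministic factor $n^{m/2}A^{-m/2}$:
\[
\hat\theta^m = n^{m/2}A^{-m/2}(\bar X - \bar Y)^m\Big[1 + \tfrac{B_x}{A}(\bar X_s - \bar X^2) + \tfrac{B_y}{A}(\bar Y_s - \bar Y^2)\Big]^{-m/2}.
\]
Three elementary expansions then apply. I would expand $(\bar X - \bar Y)^m = \sum_{j=0}^m (-1)^j\binom{m}{j}\bar X^{m-j}\bar Y^{j}$ by the binomial theorem; expand the bracket by the generalized binomial series $[1+w]^{-m/2} = \sum_{k'\ge 0} a_{m,k'}\,w^{k'}$ with $a_{m,k'}$ exactly as in \eqref{eq:amk}; and finally expand $w^{k'}$ by the binomial theorem in its two summands with split $t$, followed by expanding each $(\bar X_s - \bar X^2)^{\cdot}$ and $(\bar Y_s - \bar Y^2)^{\cdot}$ in $\bar X_s,\bar X^2$ and $\bar Y_s,\bar Y^2$ with powers $i_1,i_2$. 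This produces a raw five-fold sum indexed by $(j,k',t,i_1,i_2)$.

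Next I would take expectations term by term. Because the $X$- and $Y$-samples are independent, every summand factors as $E\big(\bar X^{\,m-j+2i_1}\bar X_s^{\,(k'-t)-i_1}\big)\,E\big(\bar Y^{\,j+2i_2}\bar Y_s^{\,t-i_2}\big) = \rho(m-j+2i_1,(k'-t)-i_1)\,\tau(j+2i_2,t-i_2)$, which is where the two bookkeeping functions enter. For the orders I would use $\rho(a,b)=\mathcal O(n^{-\lceil (a+b)/2\rceil})$ and the same bound for $\tau$: each of $\bar X,\bar X_s$ is a normalized sum of $n_x$ centered i.i.d.\ terms, so a nonzero joint moment forces every index to repeat, capping the number of free indices at $\lfloor(a+b)/2\rfloor$; since $n_x,n_y\asymp n$ this gives the stated rate. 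A short count then shows the four arguments of $\rho$ and $\tau$ sum to $m+k'+i_1+i_2$, so each summand is $\mathcal O\big(n^{\,m/2-\lceil (m+k'+i_1+i_2)/2\rceil}\big)$.

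The main work — and the only genuinely delicate step — is the regrouping that converts this raw sum into the order-graded form claimed. Guided by the order count, I would set $k = k' + i_1 + i_2$ as the new outer (order) index and substitute $i=i_1+i_2$, $v=i_2$, $u=t-i_2$, so that $k'=k-i$ (whence the bracket's factor $A^{-k'}$ becomes the stated $A^{i-k}$), $i_1=i-v$, $i_2=v$, and $t=u+v$. Checking the induced ranges gives $0\le i\le\lfloor k/2\rfloor$, $0\le u\le k-2i$, $0\le v\le i$, matching the stated limits, and the signs collapse to $(-1)^{i_1+i_2}=(-1)^i$. It then remains to show the product of original binomial coefficients $\binom{k'}{t}\binom{k'-t}{i_1}\binom{t}{i_2}$ equals $\binom{k-i}{i}\binom{k-2i}{u}\binom{i}{v}$; writing both products in factorials, each collapses to $N!/\big(u!\,v!\,(i-v)!\,(N-i-u)!\big)$ with $N=k-i$, so they agree. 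With the argument total now equal to $m+k$, so that $\rho\cdot\tau=\mathcal O(n^{-\lceil(m+k)/2\rceil})$, truncating the outer sum at $k=K$ discards only terms of order $n^{\,m/2-\lceil(m+K+1)/2\rceil}=\mathcal O(n^{-(K+1)/2})$, yielding the stated remainder. I expect the factorial identity for the binomials and the careful tracking of summation ranges under the change of variables to be where the effort concentrates; everything else is a direct two-sample replica of the one-sample argument.
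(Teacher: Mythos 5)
Your proof is correct, and it reaches the stated formula by a genuinely different decomposition than the paper's. The paper sets $\gamma_1 = A^{-1}(B_x\bar{X}_s + B_y\bar{Y}_s)$ and $\gamma_2 = A^{-1}(B_x\bar{X}^2 + B_y\bar{Y}^2)$ --- grouping across the two samples by order in $n$, so that $\gamma_1 = \mathcal{O}(n^{-1/2})$ and $\gamma_2 = \mathcal{O}(n^{-1})$ --- and then the regrouping identity already established for Proposition \ref{prop:mom1}, namely $\sum_k\sum_i a_{m,k-i}(-1)^i\binom{k-i}{i}\gamma_1^{k-2i}\gamma_2^i$, applies verbatim; the sample split is deferred to a final double-binomial expansion of $\gamma_1^{k-2i}\gamma_2^i$, and no new combinatorics is needed. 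You instead split by sample first ($w = w_x + w_y$), expand everything into a raw five-fold sum, and recover the order grading at the end via the change of variables $k = k'+i_1+i_2$, $i=i_1+i_2$, $v=i_2$, $u=t-i_2$, which obliges you to verify the identity $\binom{k'}{t}\binom{k'-t}{i_1}\binom{t}{i_2} = \binom{k-i}{i}\binom{k-2i}{u}\binom{i}{v}$ and the induced summation ranges --- all of which check out (both products reduce to $(k-i)!/\bigl(u!\,v!\,(i-v)!\,(k-2i-u)!\bigr)$, and the range bijection is exact). The paper's route buys economy, inheriting the one-sample bookkeeping as a black box; yours buys transparency about which raw monomials land in each order-$k$ slot at the cost of redoing the regrouping by hand. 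Your order bound $\rho(a,b) = \mathcal{O}\bigl(n^{-\lceil(a+b)/2\rceil}\bigr)$ and the resulting $\mathcal{O}\bigl(n^{-(K+1)/2}\bigr)$ truncation error agree with the paper (which states this form of $\rho$ explicitly in the proof of Theorem \ref{thm:AEE1}); like the paper, you bound the discarded tail term by term rather than as a whole, which matches the level of rigor adopted there.
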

The proof of this Proposition is in Appendix \ref{sec:proofProp}. \\

\section{Validity of AEE} \label{sec:theorem}

Let $X$ be a random variable with known moments and cumulants
$\kappa_j$; set $\kappa_1 = 0$, $\kappa_2 = 1$ without loss of
generality. Let $X_1, \dotsc, X_n$ be an i.i.d. sample. First,
consider a test statistic as in \eqref{eq:theta1} with a constraint
that $A$ and $B$ do not depend on $n$. The function $g(x, y) =
n^{\frac{1}{2}} x [A + B(y - x^2)]^{-\frac{1}{2}}$ is infinitely
differentiable in $x$ and $y$, so by the fundamental result of
Bhattacharia and Gosh \cite{bhattacharyaGhosh1978} and Hall
\cite{hall2013book}, if $X$ has sufficient number of finite moments,
there exists EE of the form
\begin{equation} \label{eq:EE1}
 P \left( \hat{\theta} < x\right) = F_{\hat{\theta}}(x) =
 \Phi_{A^{-1}}(x) + \sum_{k = 1}^K n^{-\frac{k}{2}} q_k (x; A, B) \phi_{A^{-1}}(x) + o
 \left(n^{-\frac{K}{2}} \right),
\end{equation}
where $q_k(x; A, B)$ are some polynomials in $x$ whose coefficients 
do not depend on $n$ and are expressed in terms of $A$ and
$B$. $\Phi_{A^{-1}}(\cdot)$ and $\phi_{A^{-1}}(\cdot)$ denote normal
$N(0, A^{-1})$ c.d.f. and p.d.f. In AEE, however, we consider test
statistics of the same form but with $A$ and $B$ replaced by $A_n$ and 
$B_n$, which do depend on $n$.

\begin{theorem} \label{thm:AEE1}
 Let $A_n = \sum_{j = 0}^{\infty} n^{-j}\tilde{a}_j$ and 
       $B_n = \sum_{j = 0}^{\infty} n^{-j} \tilde{b}_j$, where
       $\tilde{a}_j, \tilde{b}_j$ are constants and the series are
       absolutely convergent. Then, for a test statistic   
 \begin{equation*}
  \hat{\theta}_n = n^{\frac{1}{2}} \bar{X} \left[ A_n + B_n \left( \bar{X}_s -
      \bar{X}^2 \right)\right]^{-\frac{1}{2}},
 \end{equation*}
 there exists AEE of the form
 \begin{equation} \label{eq:thm1}
  P \left(\hat{\theta}_n < x \right) =
  F_{\hat{\theta}_n}(x) = \Phi_{A_n^{-1}}(x) + \sum_{k = 1}^K n^{-\frac{k}{2}}
  q_k(x; \, A_n, B_n) \phi_{A_n^{-1}}(x) + o \left(n^{-\frac{K}{2}} \right).
 \end{equation}
\end{theorem}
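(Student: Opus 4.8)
The plan is to reduce the claim to the already-established expansion \eqref{eq:EE1} for \emph{fixed} $A$ and $B$, upgraded so that it holds \emph{uniformly} in the pair $(A,B)$, and then to substitute the $n$-dependent values $A=A_n$, $B=B_n$. The starting observation is that for every fixed $n$ the statistic $\hat\theta_n$ is literally the statistic \eqref{eq:theta1} with the constants $A,B$ set equal to the numbers $A_n,B_n$; writing $\hat\theta^{(A,B)}$ for \eqref{eq:theta1} with frozen parameters, this means $F_{\hat\theta_n}(x)=F_{\hat\theta^{(A_n,B_n)}}(x)$. Thus the content of the theorem is that one may let the parameters drift with $n$ along the convergent series $A_n,B_n$ without destroying the $o(n^{-K/2})$ remainder, provided the drift stays in a region where the base expansion is uniformly valid.

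First I would record that the series hypotheses pin the parameters down: since $\sum_j|\tilde a_j|n^{-j}$ and $\sum_j|\tilde b_j|n^{-j}$ converge, we have $A_n\to\tilde a_0$, $B_n\to\tilde b_0$ with $A_n-\tilde a_0=\mathcal O(n^{-1})$, $B_n-\tilde b_0=\mathcal O(n^{-1})$, and all $A_n,B_n$ uniformly bounded. Assuming $\tilde a_0>0$, $\tilde b_0>0$ (so that $A_n,B_n>0$ for large $n$, as in the setup), the set $\{(A_n,B_n):n\ge n_0\}$ together with its limit $(\tilde a_0,\tilde b_0)$ lies in a fixed compact rectangle $\mathcal K\subset\{A>0,\,B>0\}$ on which $A$ is bounded away from $0$.

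The central step is to strengthen \eqref{eq:EE1} to the uniform statement that there is a function $\varepsilon_n=o(n^{-K/2})$ with
\[
\sup_{(A,B)\in\mathcal K}\ \sup_{x}\Big|F_{\hat\theta^{(A,B)}}(x)-\Phi_{A^{-1}}(x)-\sum_{k=1}^K n^{-\frac k2}q_k(x;A,B)\,\phi_{A^{-1}}(x)\Big|\le\varepsilon_n .
\]
This is where the real work lies. I would obtain it by revisiting the Bhattacharya--Ghosh/Hall derivation and checking that every constant entering the error bound can be chosen independently of $(A,B)\in\mathcal K$. The regularity conditions that control the remainder — the finite-moment condition and the non-singularity (Cram\'er-type) condition — are conditions on the law of $X$ alone and hence are untouched by $(A,B)$. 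The smooth function $g(x,y)=n^{1/2}x[A+B(y-x^2)]^{-1/2}$ is, near the mean $(0,0)$ of $(\bar X,\bar X_s)$, analytic with derivatives depending continuously on $(A,B)$, so their suprema over the compact $\mathcal K$ are finite; likewise the cumulants of $\hat\theta^{(A,B)}$ produced by Proposition \ref{prop:mom1} are continuous (rational) functions of $(A,B)$ with coefficients given by moments of $X$, so their $n^{-1/2}$-expansions have coefficients uniformly bounded on $\mathcal K$, and the polynomials $q_k(x;A,B)$ with their coefficients inherit the same uniform continuity. Feeding these uniform bounds into the smoothing inequality underlying the classical proof yields the displayed uniform remainder.

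With the uniform expansion in hand the theorem follows by a one-line substitution: evaluating the inequality at the admissible parameter value $(A_n,B_n)\in\mathcal K$ gives
\[
\sup_x\Big|F_{\hat\theta_n}(x)-\Phi_{A_n^{-1}}(x)-\sum_{k=1}^K n^{-\frac k2}q_k(x;A_n,B_n)\,\phi_{A_n^{-1}}(x)\Big|\le\varepsilon_n=o\!\left(n^{-\frac K2}\right),
\]
which is exactly \eqref{eq:thm1}; the same argument applies verbatim to the two-sample statistic \eqref{eq:theta2}. The main obstacle is precisely the uniformity step, and it is also the conceptual heart of ``adjusted'' expansions: one cannot instead freeze the parameters at their limits $(\tilde a_0,\tilde b_0)$, because $A_n-\tilde a_0=\mathcal O(n^{-1})$ forces $\hat\theta_n-\hat\theta^{(\tilde a_0,\tilde b_0)}=\mathcal O_p(n^{-1})$, a discrepancy that already dominates the target error $n^{-K/2}$ once $K\ge 2$. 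Carrying $A_n,B_n$ unexpanded into $\Phi_{A_n^{-1}}$, $\phi_{A_n^{-1}}$ and the $q_k$ is therefore essential, and uniform validity of the base expansion over a parameter neighborhood is what legitimizes it.
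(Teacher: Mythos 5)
Your reduction of the theorem to a uniform-in-$(A,B)$ version of \eqref{eq:EE1} over a compact set $\mathcal K$, followed by substitution of $(A_n,B_n)$, is a logically sound strategy and correctly identifies what the theorem is really asserting (the paper itself notes that the $q_k(x;A_n,B_n)$ are the fixed-parameter polynomials with $A_n,B_n$ substituted). But it is a genuinely different route from the paper's. The paper never establishes uniformity of the Bhattacharya--Ghosh/Hall remainder in the parameters. Instead it works entirely at the level of cumulants of the single statistic $\hat\theta_n$: it derives two representations of $\kappa_{\hat\theta,M}$ as series in $n^{-1/2}$ --- one in which $A_n$ and $b_n=B_n/A_n$ are kept as unexpanded symbols (feeding the AEE, eq.\ \eqref{eq:woutn}) and one in which $b_n^kA_n^{-M/2}$ is itself expanded in powers of $n^{-1}$ and the terms regrouped (eq.\ \eqref{eq:withn}), the latter landing exactly in the classical form $n^{-(M-2)/2}(k_{M,0}+n^{-1}k_{M,1}+\dotsb)$ for which validity is known from Hall. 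It then bounds the finite-term difference between the two truncated representations by $\mathcal O\bigl(n^{-(K/2+1)}\bigr)$, which propagates to an $\mathcal O\bigl(n^{-(K+1)/2}\bigr)$ difference between the two $K$-term expansions, so the adjusted expansion inherits validity from the classical one. In short: the paper applies the classical validity theorem once, to the fully $n$-expanded cumulant series of $\hat\theta_n$ itself, and then does an algebraic order comparison; you apply the classical theorem to a whole family of fixed-parameter statistics and need it uniformly.

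The caveat is that your central step --- the uniform remainder $\varepsilon_n=o(n^{-K/2})$ over $\mathcal K$ --- is asserted with a plausibility sketch rather than proved, and it is the entire content of the argument. Your sketch does identify the right ingredients (moment and Cram\'er conditions depend on $X$ alone; derivatives of $g$ and the asymptotic variance $\sigma^2/A$ are uniformly controlled on a compact $\mathcal K$ bounded away from $A=0$), and I believe the claim is true; but making it rigorous amounts to re-running the smoothing-inequality argument of Bhattacharya--Ghosh with constants tracked uniformly in $(A,B)$, which is a substantial verification you have not carried out. The paper's route avoids this entirely at the cost of the explicit (and somewhat laborious) bookkeeping of the $\tilde g$, $g$, $G$, $t$, $\tilde t$ coefficient functions. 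Your closing observation --- that freezing the parameters at $(\tilde a_0,\tilde b_0)$ fails because the $\mathcal O(n^{-1})$ drift already exceeds the target error for $K\geqslant 2$ --- is correct and is a good articulation of why the ``adjusted'' formulation is needed at all.
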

Note that expressions for polynomials $q_k(x; \, A_n, B_n)$ are 
the same as those for $q_k(x; \, A, B)$ in \eqref{eq:EE1} apart from 
$A_n$, $B_n$ replacing $A$, $B$. \\

The proof (provided in Appendix \ref{sec:proofThm}) derives the order
of finite-term difference between two 
series that represent cumulants of $\hat{\theta}_n$: one that can be
used for classic EE and the other - for AEE. Consequently, using
the difference $F_{n, K}(x) - \tilde{F}_{n, K}(x)$ and validity of
classic EE, we establish the order of 
$F_{\hat{\theta}_n}(x) - \tilde{F}_{n, K}(x)$. \\

To explicitly relate \eqref{eq:thm1} to the original expression for
AEE \eqref{eq:AEE}, consider the case where $Var(X) = \sigma^2$ and
let $r^2 = \sigma^2/A_n$. Then, substituting $x' = x/\sigma$ for $x$
in \eqref{eq:thm1}, we get $\Phi_{A_n^{-1}}(x') = \Phi(x' \sqrt{A_n})
= \Phi(x'/r)$ and $\phi_{A_n^{-1}}(x) = \phi(x'/r)$. \\ 

For a two-sample $t$-test, as previously, we consider a sample $X_1,
\dotsc, X_{n_x}, Y_1, \dotsc, Y_{n_y}$ and set $n = (n_x + n_y)/2$.

\begin{theorem} \label{thm:AEE2}
  Let $A_n = \sum_{i = 0}^{\infty} \left(\tilde{a}_{xi}/n_x^i
    + \tilde{a}_{yi}/n_y^i \right)$,
  $B_{xn} = \sum_{i = 0}^{\infty} \tilde{b}_{xi}/n_x^i$, and
  $B_{yn} = \sum_{i = 0}^{\infty} \tilde{b}_{yi}/n_y^i$, where
  $\tilde{a}_{xi}$, $\tilde{a}_{yi}$, $\tilde{b}_{xi}$, and
  $\tilde{b}_{yi}$ do not depend on $n_x$, $n_y$, and the series are
  absolutely convergent. Then, for a test statistic
\begin{equation*}
  \hat{\theta}_n = n^{\frac{1}{2}} (\bar{X} - \bar{Y}) \left[A_n +
  B_{xn} (\bar{X}_s - \bar{X}^2) + B_{yn} (\bar{Y}_s - \bar{Y}^2)
\right]^{-\frac{1}{2}} 
\end{equation*}
there exists AEE of the form
\begin{equation*} 
  P \left(\hat{\theta}_n < x \right) =
  F_{\hat{\theta}_n}(x) = \Phi_{A_n^{-1}}(x) + \sum_{k = 1}^K n^{-\frac{k}{2}}
  q_k(x; \, A_n, B_{xn}, B_{yn}) \phi_{A_n^{-1}}(x) + o \left(n^{-\frac{K}{2}} \right).
\end{equation*}
\end{theorem}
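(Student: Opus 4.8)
The plan is to mirror the one-sample argument of Theorem~\ref{thm:AEE1}, reducing the two-sample problem to the same structure. The first observation is that, under the assumed comparable sample sizes, the ratios $b_x = n/n_x$ and $b_y = n/n_y$ are bounded and may be treated as fixed constants, so $n_x^{-i} = b_x^i\, n^{-i}$ and $n_y^{-i} = b_y^i\, n^{-i}$. Substituting these into the hypotheses gives $A_n = \sum_{i=0}^{\infty} n^{-i}(\tilde a_{xi} b_x^i + \tilde a_{yi} b_y^i)$, and likewise $B_{xn}$ and $B_{yn}$ become absolutely convergent power series in $n^{-1}$ whose coefficients do not depend on $n$. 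Thus the three coefficient functions take exactly the one-sample form of Theorem~\ref{thm:AEE1}, and the only genuinely new feature is that $\hat\theta_n$ is a smooth function of the joint sample-mean vector $(\bar X, \overline{X^2}, \bar Y, \overline{Y^2})$ built from two independent samples.

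First I would establish the constant-coefficient expansion. Freezing $A$, $B_x$, $B_y$ at fixed positive values, the map $g(\cdot) = \sqrt n\,(\bar X - \bar Y)\,[A + B_x(\bar X_s - \bar X^2) + B_y(\bar Y_s - \bar Y^2)]^{-1/2}$ is infinitely differentiable in a neighborhood of the population mean (where the bracket is positive), so by the Bhattacharya--Ghosh and Hall results already invoked for \eqref{eq:EE1} --- applied to the combined four-dimensional mean of two independent samples, under the usual finite-moment and non-singularity (Cram\'er-type) conditions on $X$ and $Y$ --- there is a classic two-sample EE $F_{\hat\theta}(x) = \Phi_{A^{-1}}(x) + \sum_{k=1}^K n^{-k/2} q_k(x; A, B_x, B_y)\,\phi_{A^{-1}}(x) + o(n^{-K/2})$. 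The expansion proceeds in powers of $n^{-1/2}$ because independence factors the joint characteristic function and the fixed ratios $b_x$, $b_y$ consolidate the separate $n_x^{-1/2}$, $n_y^{-1/2}$ contributions into $n^{-1/2}$.

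The heart of the argument is the cumulant comparison. Using Proposition~\ref{prop:mom2}, the cumulants of $\hat\theta_n$ are explicit functions of $A_n$, $B_{xn}$, $B_{yn}$ and the moments $\rho(i,j)$, $\tau(i,j)$, the latter carrying the explicit powers of $n^{-1/2}$ from the sample-mean structure. I would organize the cumulant expansions two ways: (i) the AEE form, in which $A_n$, $B_{xn}$, $B_{yn}$ are kept intact and only the moment contributions are expanded, yielding coefficients that are functions of $A_n, B_{xn}, B_{yn}$ and hence the polynomials $q_k(x; A_n, B_{xn}, B_{yn})$ with leading normal $N(0, A_n^{-1})$; and (ii) the classic form, in which the convergent series for $A_n$, $B_{xn}$, $B_{yn}$ are further substituted and all terms recollected by powers of $n^{-1/2}$, reproducing the constant-coefficient EE of the previous step. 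The key point is that these are two truncations of one and the same convergent series; writing $A_n = A + R_A$ with $R_A = O(n^{-1})$ (and similarly for $B_{xn}$, $B_{yn}$), absolute convergence bounds the tails uniformly, and reorganizing by keeping the coefficients intact rather than expanding them moves terms across the order-$K$ boundary only by amounts of order $n^{-(K+1)/2}$. Consequently the two truncated cumulant series agree up to $o(n^{-K/2})$.

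Finally I would transfer this discrepancy through the common Hermite / inverse-Fourier machinery (with the variance adjustment $r^2 = \sigma^2/A_n$ noted after Theorem~\ref{thm:AEE1}), so that $\tilde F_{n,K}(x) - F_{n,K}(x) = o(n^{-K/2})$ uniformly in $x$; combining this with the classic validity $F_{\hat\theta_n}(x) - F_{n,K}(x) = o(n^{-K/2})$ from the first step yields $F_{\hat\theta_n}(x) - \tilde F_{n,K}(x) = o(n^{-K/2})$, which is the claim. The main obstacle I anticipate is the two-sample bookkeeping in the cumulant comparison: unlike the single series of Theorem~\ref{thm:AEE1}, here $A_n$ mixes the two perturbation series in $n_x^{-1}$ and $n_y^{-1}$ while $\rho$ and $\tau$ contribute independent powers of $n_x^{-1/2}$ and $n_y^{-1/2}$, so I must verify that all of these consolidate into clean powers of $n^{-1/2}$, that the cross terms between the two coefficient series stay beyond the truncation order, and that the correct anchor throughout is the $n$-dependent variance $A_n^{-1}$ rather than $A^{-1}$.
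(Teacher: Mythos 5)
Your proposal follows essentially the same route as the paper: the paper's own proof of Theorem~\ref{thm:AEE2} is a one-paragraph reduction observing that the moment structure in \eqref{eq:thetamt2} (with $b_x=n/n_x$, $b_y=n/n_y$ fixed, so all series consolidate into powers of $n^{-1/2}$) mirrors \eqref{eq:thetamt1}, and then invokes the cumulant-comparison argument of Theorem~\ref{thm:AEE1} verbatim. Your write-up is in fact more explicit than the paper's about the two-sample bookkeeping (the four-dimensional mean vector, the substitution $n_x^{-i}=b_x^i n^{-i}$, and the finite-term difference between the two cumulant truncations), but the underlying argument is the same.
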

The proof of this Theorem is in Appendix \ref{sec:proofThm}. \\

\section{Results} \label{sec:results}

In this section, we provide expressions for AEE at different levels of
generalization. Recall that in the process of obtaining EE, cumulants
$\kappa_{\hat{\theta}, j}$ of sampling distribution are expressed as
power series in $n^{-1/2}$. As seen in, for example,
\cite{hall2013book}, \cite{bickel1974review}: 
\begin{equation*}
 \kappa_{\hat{\theta}, j} = n^{-\frac{j-2}{2}} \left( k_{j, 1} + n^{-1} k_{j, 2} +
   n^{-2} k_{j, 3} + \dotsb \right),  \qquad j \geqslant 1. 
\end{equation*}
Once $k_{j, l}$ are obtained, they can be used to calculate
polynomials $q_k(x; r)$ in \eqref{eq:AEE}, together with Hermite
polynomials. Expressions for $q_k$ as functions of $k_{j, l}$ can be
used for AEE of any test statistic. Next, for one- and two-sample
generalized $t$-statistics, we look at $k_{j, l}$ as functions of
$\mu_j$, $A_n$, and $B_n$ (going forward, we omit the subscript $n$
for brevity). Finally, we provide $A$, $B$, and $r^2$ for some
commonly used versions of these statistics as well as for moderated
$t$-statistics based on empirical Bayes methods
\cite{smyth2004statistical}). In addition, for the simplest special
case of an ordinary one-sample $t$-statistic with na\"ive biased and
unbiased variance estimators, this nested chain of expressions reduces
to a nice short form where $q_k(x; r)$ are given in terms of
standardized cumulants $\lambda_j$ (provided here for the 4-term
AEE). For these particular statistics, such form is useful for
calculations and also allows for an illuminating comparison with known 
expressions for standardized mean. 2-term EE of this kind is found
in \cite{hall1987edgeworth, hall2013book}. \\

\subsection{General case}

For a given test statistic, first few polynomials $q_k(x; r)$ of AEE
\eqref{eq:AEE} are given by
\begin{flalign*}
 q_1(x; r) &= - \frac{1}{6 \, r^3} \, k_{3,1} \, He_2 \left(\frac{x}{r}
 \right) - \frac{1}{r} \, k_{1,2} &
\end{flalign*}
\begin{flalign*}
 q_2(x; r) = &- \frac{1}{72 \, r^6} \, k_{3,1}^{2} \, He_5\left(\frac{x}{r}\right)
 - \frac{1}{24\, r^4} \, {\left(4 \, k_{1,2} k_{3,1} + k_{4,1}\right)} \,
 {He}_3 \left(\frac{x}{r}\right) \notag \\[1mm]
               & - \frac{1}{2 \, r^2} \, {\left(k_{1,2}^{2} +
                   k_{2,2}\right)} \, {He}_1 \left(\frac{x}{r}\right), &
\end{flalign*}
where $He_j(x)$ are probabilists' Hermite polynomials. Since $k_{j,
  l}$'s do not depend on $x$, this approach is especially useful if
$\tilde{F}_{n, K}(x)$ needs to be calculated for many values of $x$. \\

For generalized one- and two-sample $t$-statistics, we show some lower  
order $k_{j, l}$'s in this paper; all $k_{j, l}$'s needed for fifth
order AEE, as well as remaining general case $q_k(x; r)$, can be found 
in the \href{https://github.com/innager/AEE}{\textit{Sage} notebook}
and \href{https://github.com/innager/edgee}{\textit{edgee} R
  package} \cite{packageEdgee}. Note that $k_{2, 1} = r^2$. \\

For the one-sample $t$-statistic: 
\begin{flalign*}
  &k_{1, 2} = - \frac{B \mu_3}{2 A^{\frac{3}{2}}}&
\end{flalign*}
\begin{flalign*}
  &k_{1, 3} = -\frac{6(8\mu_2 \mu_3 - \mu_5)A B^2 - 15 (\mu_2^2 \mu_3
    - \mu_3 \mu_4) B^3 - 8 A^2 B \mu_3}{16 A^{\frac{7}{2}}} & 
\end{flalign*}
\begin{flalign*}
  &k_{2, 1} = \frac{\mu_2}{A}&
\end{flalign*}
\begin{flalign*}
  &k_{2, 2} = \frac{4(4\mu_2^2 - \mu_4)AB - (4\mu_2^3 - 7
    \mu_3^2 - 4 \mu_2 \mu_4) B^2}{4 A^3}&
\end{flalign*}
\begin{flalign*}
  &k_{3, 1} = -\frac{(3 B \mu_2 - A) \mu_3}{A^{\frac{5}{2}}}& 
\end{flalign*}
\begin{flalign*}
  &k_{4, 1} = -\frac{(3 \mu_2^2 - \mu_4) A^2 - 6 (3 \mu_2^3 - \mu_3^2
    - \mu_2 \mu_4) A B + 3 (\mu_2^4 - 6 \mu_2 \mu_3^2 - \mu_2^2 \mu_4)
    B^2}{A^4} &
\end{flalign*}
\\
The two-sample $t$-statistic:
\begin{flalign*}
 &k_{1, 2} = -\frac{B_x b_x \mu_{x, 3} - B_y b_y \mu_{y, 3}}{2 A^{\frac{3}{2}}}&
\end{flalign*}
\begin{flalign*}
  &k_{2, 1} = \frac{b_x \mu_{x, 2} + b_y \mu_{y, 2}}{A}&
\end{flalign*}
\begin{flalign*}
  k_{2, 2} = &-\frac{4 B_x^2 b_x^2 \mu_{x, 2}^3 + 4 B_y^2 b_x b_y
    \mu_{x, 2} \mu_{y, 2}^2 + 4 B_y^2 b_y^2 \mu_{y, 2}^3 - 4 B_x^2
    b_x^2 \mu_{x, 2} \mu_{x, 4}}{4 A^3} \\
                &+ \frac{14 B_x B_y b_x b_y \mu_{x, 3} \mu_{y, 3} - 7
                  B_x^2 b_x^2 \mu_{x, 3}^2 - 7 B_y^2 b_y^2 \mu_{y, 3}^2}{4 A^3} \\
              & - \frac{4 A (B_x b_x^2 (4 \mu_{x, 2}^2 -\mu_{x, 4}) +
                B_y b_y^2 (4 \mu_{y, 2}^2 - \mu_{y, 4}) + (B_x b_x b_y + B_y b_x
     b_y) \mu_{x, 2} \mu_{y, 2})}{4 A^3} \\
              &+ \frac{4 (B_x^2 b_x b_y \mu_{x, 2}^2 -
     B_x^2 b_x b_y \mu_{x, 4}) \mu_{y, 2} - 4 (B_y^2 b_x b_y \mu_{x,
       2} + B_y^2 b_y^2 \mu_{y, 2}) \mu_{y, 4}}{4 A^3}&
\end{flalign*}
\begin{flalign*}
  k_{3, 1} = &- \frac{(3B_x b_x^2 \mu_{x, 2} + 3 B_x b_x b_y \mu_{y,
      2} - Ab_x^2) \mu_{x, 3} - (3B_y b_x b_y \mu_{x, 2} + 3B_y b_y^2
    \mu_{y, 2} - Ab_y^2) \mu_{y, 3}}{A^{\frac{5}{2}}} & 
\end{flalign*}
Thus $r^2 = \sigma^2/A$ for a one-sample $t$-statistic and $r^2 = (b_x
\sigma_x^2 + b_y \sigma_y^2)/A$ for a two-sample $t$-statistic. \\

\subsection{Examples of specific $t$-tests}

Statistics we consider here are a set of commonly used ordinary
$t$-statistics as well as moderated statistics that incorporate more
complex variance estimators \cite{smyth2004statistical}. For the first set, we
look at one-sample $t$-statistics with na\"ive biased and unbiased
variance estimators (with Bessel's correction), two-sample $t$-statistic
that assumes equal variances between two groups and uses pooled
(unbiased) variance estimator, and Welch $t$-tests that do not assume
equal variances - with both na\"ive biased and unbiased variance
estimators. For a higher-order approach to two-sample equal variance
test, we also assume equality of higher moments of distributions of
$X$ and $Y$. Note that these assumptions allow for a more efficient
estimator, so there is an advantage to using pooled variance if the
equality assumption is reasonable. \\ 

Moderated $t$-statistic,
which uses empirical Bayes approach, became a great practical tool
widely used in high-dimensional data analysis. In this case, the
normalizing factor is a posterior variance for a feature $g$ (e.g. a gene)
that incorporates prior information. The method uses a hierarchical
model, in which two hyperparameters $s_0^2$ and $d_0$ are estimated
from the data that has many features. Estimators for these
parameters have a closed form and are sufficiently stable due to the fact
that high dimensionality provides extensive information from which
only two hyperparameters are estimated - even when the number of
replicates (sample size) is small. This allows us to treat $s_0^2$ and
$d_0$ as constants in deriving AEE. 
Posterior variance $\tilde{s}_g^2$ for a feature $g$ is a linear
combination of $s_0^2$ and a sample/residual variance $s_g^2$: 
$\displaystyle{\tilde{s}_g^2 = \frac{d_0 s_0^2 + d_g {s_g^2}}{d_0 + d_g}}$, 
where $d_0$ and $d_g$ are prior and residual degrees of
freedom. Because of that, moderated $t$-statistic can also be viewed 
as a generalization for any scaled mean-based statistic as it can be
reduced to either standardized ($d_g = 0$) or studentized ($d_0 = 0$)
version. If data are distributed normally, moderated $t$-statistic
follows a $t$-distribution with augmented ($d_g + d_0$) degrees of
freedom.  \\

Let $C = \frac{n}{n - 1}$ for one-sample tests and $C_x =
\frac{n_x}{n_x - 1}$, $C_y = \frac{n_y}{n_y - 1}$, and $C_{xy} =
\frac{n_x + n_y}{n_x + n_y - 2} = \frac{n}{n - 1}$ for two-sample
tests (recall that $n = \frac{n_x + n_y}{2}$, $b_x = \frac{n}{n_x}$,
and $b_y = \frac{n}{n_y}$). Generalized expressions for variance
estimators $s^2 = A + B(\bar{X}_s - \bar{X}^2)$ (one-sample) and $s^2 =
A + B_x(\bar{X}_s - \bar{X}^2) + B_y(\bar{Y}_s - \bar{Y}^2)$ (two-sample)
allow us to easily extract $A$ and $B$ for each particular case. Note
that for moderated $t$-statistics $d_g = n - 1$ for one-sample and
$d_g = n_x + n_y - 2$ for two-sample tests (and thus $C_{xy} d_g = n_x
+ n_y = 2n$). \\

Table \ref{tbl:onesmp} provides expressions for $A$, $B$, and $r^2$
for various one-sample $t$-statistics, $\hat{\theta} = \sqrt{n}
\bar{X} / s$. For two-sample versions, $\hat{\theta} = \sqrt{n}
(\bar{X} - \bar{Y}) / s$, we introduce some short-hand notations: let
$s_x^2 = n^{-1} 
\sum_{i = 1}^{n_x} (X_i - \bar{X})^2$, $s_y^2 = n^{-1} \sum_{i =
  1}^{n_y} (Y_i - \bar{Y})^2$, and $s_{xy}^2 = (n_x + n_y)^{-1}
\big(\sum_{i = 1}^{n_x} (X_i - \bar{X})^2 + \sum_{i = 1}^{n_y} (Y_i
-\bar{Y})^2\big)$ be na\"ive biased estimators of $\sigma_x^2$,
$\sigma_y^2$, and $\sigma^2 = \sigma_x^2 = \sigma_y^2$
respectively. Table \ref{tbl:twosmp} shows expressions for $A$, $B_x$,
$B_y$, and $r^2$ for several two-sample $t$-tests. \\

\begin{table}[!h] 
 \centering
 \small
\renewcommand{\arraystretch}{3}
 \hspace*{-0.1cm}\begin{tabular}{ c p{1.7cm} c c c c } 
  type & variance estimator & $s^2$ & $A$ & $B$ & $r^2$ \\  \hline 
  ordinary & biased 
  & $\displaystyle{\frac{1}{n} \sum_{i = 1}^n (X_i - \bar{X})^2}$ 
  & $\sigma^2$ & $1$ & $1$ \\ \hline 
  ordinary & unbiased 
  & $\displaystyle{\frac{1}{n - 1} \sum_{i = 1}^n (X_i - \bar{X})^2}$ 
  & $C \sigma^2$& $C$ & $\displaystyle{\frac{1}{C}}$ \\ \hline
  moderated & posterior 
  & $\displaystyle{\frac{d_0 s_0^2 + \sum_{i = 1}^n (X_i - \bar{X})^2}{d_0 + n - 1}}$ 
  & $\displaystyle{\frac{d_0 s_0^2 + n \sigma^2}{d_0 + n - 1}}$ 
                          & $\displaystyle{\frac{n}{d_0 + n - 1}}$ 
                          & $\displaystyle{\frac{d_0 + n - 1}{d_0
                            s_0^2/\sigma^2 + n}} $ \\ \hline
  \end{tabular}
 \caption{One-sample statistics}
 \label{tbl:onesmp}
\end{table}

\begin{table} 
  \footnotesize
  \centering
\renewcommand{\arraystretch}{3}
 \hspace*{-1.4cm}\begin{tabular}{ c p{1.6cm} c c c c c } 
  type & variance estimator & $s^2$ & $A$ & $B_x$ & $B_y$ & $r^2$ \\ \hline 
  Welch & biased 
  & $b_x s_x^2 + b_y s_y^2$
  & $b_x \sigma_x^2 + b_y \sigma_y^2$
  & $b_x$ & $b_y$ & $1$ \\ \hline 
  Welch & unbiased 
  & $C_x b_x s_x^2 + C_y b_y s_y^2$
  & $C_x b_x \sigma_x^2 + C_y b_y \sigma_y^2$
  & $C_x b_x$ & $C_y b_y$ 
  & $\displaystyle{\frac{b_x \sigma_x^2 + b_y \sigma_y^2}
                           {C_x b_x \sigma_x^2 + C_y b_y \sigma_y^2}}$ \\ \hline 
  ordinary & pooled unbiased 
  & $C_{xy} (b_x + b_y) s_{xy}^2$
  & $C_{xy} (b_x + b_y) \sigma^2$
  & $C_{xy} b_y$ & $C_{xy} b_x$ 
  & $\displaystyle{\frac{1}{C_{xy}}}$ \\ \hline
  moderated & posterior 
  & $\displaystyle{\frac{(b_x + b_y)(d_0 s_0^2 + C_{xy} d_g s_{xy}^2)}
                                    {d_0 + d_g}}$
  & $\displaystyle{\frac{(b_x + b_y)(d_0 s_0^2 + C_{xy}d_g\sigma^2)}
                                    {d_0 + d_g}}$
  & $\displaystyle{\frac{C_{xy} d_g b_y}{d_0 + d_g}}$ 
  & $\displaystyle{\frac{C_{xy} d_g b_x}{d_0 + d_g}}$ 
  & $\displaystyle{\frac{d_0 + d_g}
                                    {d_0 s_0^2/\sigma^2 + C_{xy}d_g}}$ \\ \hline
  \end{tabular}
 \caption{Two-sample statistics}
 \label{tbl:twosmp}
\end{table}

\subsection{Special case: one-sample ordinary $t$} \label{sec:short}

When the scaling factor is na\"ive biased variance estimator $s_b^2$,
classic EE coincide with AEE ($r^2 = 1$). Polynomials representing a $4$-term
expansion are written in terms of $\lambda_j = \kappa_j/\sigma^j$, which
allows for a comparison with traditional expressions for standardized
mean.

\begin{flalign*} 
 q_1(x; 1) &= \, \frac{1}{6} \, \lambda_{3} \, {\left(2 \, x^{2} + 1\right)} &
\end{flalign*} 

\begin{flalign*}  
 q_2(x; 1) &= \, \frac{1}{12} \, \lambda_4 \, {\left(x^{3} -
3 \, x\right)}  -\frac{1}{18} \, \lambda_3^2 \, {\left(x^{5} + 2 \, x^{3} - 3 \, x\right)}
- \frac{1}{4} \, {\left( x^{3} + 3 \, x \right)} &
\end{flalign*} 

\begin{flalign*} 
 q_3(x; 1) = & \, - \frac{1}{40} \, \lambda_{5} \, {\left(2 \, x^{4} + 8 \,
     x^{2} + 1\right)} - \frac{1}{144} \, \lambda_{3} \lambda_{4} \, {\left(4 \,
x^{6} - 30 \, x^{4} - 90 \, x^{2} - 15\right)} \notag \\[1mm]
 & \, + \frac{1}{1296} \, \lambda_{3}^{3} \, {\left(8 \, x^{8} + 28 \,
     x^{6} - 210 \, x^{4} - 525 \, x^{2} - 105\right)} \notag \\[1mm]
 & \, + \frac{1}{24} \, \lambda_{3} \, {\left(2 \, x^{6} - 3 \, x^{4}
     - 6 \, x^{2}\right)} & 
\end{flalign*}

\begin{flalign*} 
 q_4(x; 1) = \, 
&- \frac{1}{90} \, \lambda_{6} \, {\left(2 \, x^{5} - 5 \, x^{3} - 15 \, x\right)}
+ \frac{1}{60} \, \lambda_{3} \lambda_{5} \, {\left(x^{7} + 8 \, x^{5} -
5 \, x^{3} - 30 \, x\right)} \notag \\[1mm]
&- \frac{1}{288} \, \lambda_{4}^{2} \, {\left(x^{7} - 21 \, x^{5} + 33
    \, x^{3} + 111 \, x\right)} \notag \\[1mm] 
&+ \frac{1}{216} \, \lambda_{3}^{2} \lambda_{4} \, {\left(x^{9} - 12
    \, x^{7} - 90 \, x^{5} + 36 \, x^{3} + 261 \, x\right)} \notag \\[1mm]
&- \frac{1}{1944} \, \lambda_{3}^{4} \, {\left(x^{11} + 5 \, x^{9} -
    90 \, x^{7} - 450 \, x^{5} + 45 \, x^{3} + 945 \, x\right)} \notag
\\[1mm] 
& + \frac{1}{48} \, \lambda_{4} \, {\left( x^{7} - 7 \, x^{5} + 9 \, x^{3} + 21 \,
    x\right)} - \frac{1}{72} \, \lambda_{3}^{2} \, {\left(x^{9} - 6 \,
    x^{7} - 12 \, x^{5} - 18 \, x^{3} - 9 \, x\right)} \notag \\[1mm] 
& - \frac{1}{96} \, { \left( 3 \, x^7 + 5 \, x^5 + 7 \, x^3 + 21 \, x \right) } &
\end{flalign*}

These expressions can be also used for the most common $t$-statistic
with unbiased variance estimator (if $X$ is distributed normally, this
statistic has Student's $t$-distribution). In that particular case,
$q_k(x; r) = q_k(x/r; 1) = q_k(\sqrt{\frac{n}{n -1}} \, x; 1)$. \\

\section{Illustration of Higher-Order
  Approximations} \label{sec:illustr}

To provide an illustration for higher-order approximations to the
distribution of a $t$-statistic, we consider an example with a small
sample ($n = 10$) of i.i.d. centered gamma distributed random
variables with shape parameter $k = 3$: $X \sim \Gamma(3, 1) - 3$ and
two versions of an ordinary $t$-statistic - with biased $s_b^2$ and
unbiased $s_{unb}^2$ variance estimators: $t_1 = n^{1/2}\bar{X}/s_b$
and $t_2 = n^{1/2}\bar{X}/s_{unb}$. Figure \ref{fig:terms} displays AEE
of up to fifth order ($0-4$-term expansions) for $t_1$ and $t_2$ along
with their respective true sampling distributions; known values of
$\lambda_j$ are used for the expansions. \\
\begin{figure}
 \centering
 \includegraphics[scale = 0.61]{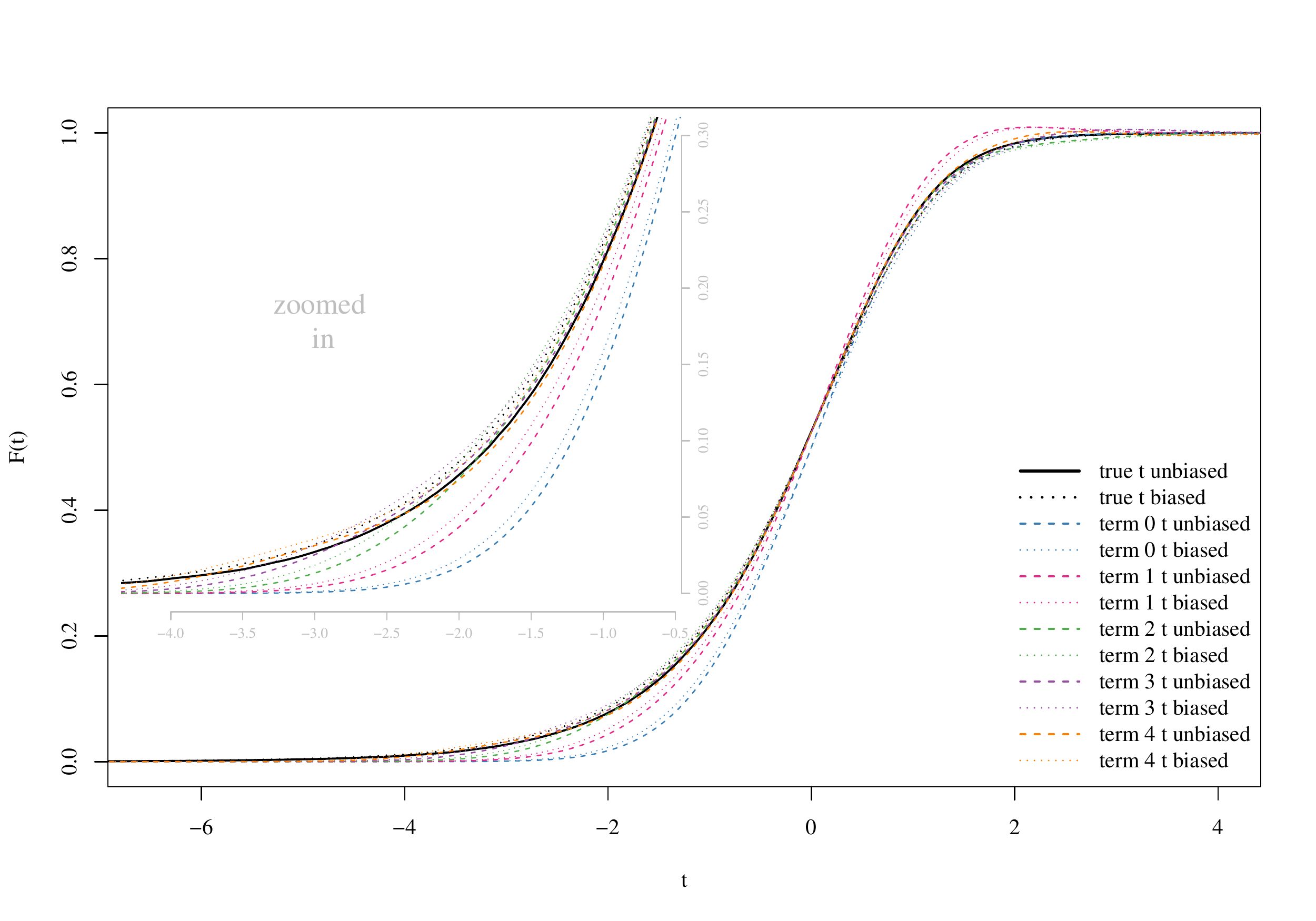} 
 \caption{Sampling distributions of an ordinary $t$-statistic with
   biased and unbiased variance estimators and their AEE
   approximations. $X \sim \Gamma(3, 1) - 3$, $n = 10$.} 
 \label{fig:terms}
\end{figure}

Edgeworth expansions are not probability functions and do not have
their properties - they are not necessarily monotonic everywhere and
might not be bounded by $0$ and $1$. This irregular behavior is
usually localized in the thinner tail of the distribution and
therefore EE are not very helpful there; it is clearly seen in the
second order approximation (term $1$) in the graph. We focus on the
thicker left tail where inference based on the first order
approximation would be anti-conservative (discussed in more detail
in Section \ref{sec:discussion}). The difference between the normal
approximation (term $0$) and the true distribution is quite striking;
subsequent orders improve approximation considerably. It appears that
the third order is already fairly close to the truth; however, as we
move away from the center and into the far tail, this approximation
gets further from the distribution and higher order terms come into
play proving the value of Edgeworth expansions of the orders beyond the
second and even third. This indicates how AEE can be used to adjust
inference for detected departures from normality in the tails of a
sampling distribution.  \\

\section{Discussion} \label{sec:discussion}

Generalized results for one- and two-sample $t$-statistics offer a 
possibility of using AEE in a variety of data analysis scenarios and
statistical procedures. As Figure \ref{fig:terms} demonstrates, first
order approximation may result in anti-conservative inference and
consequently lack of error rate  control
\cite{gerlovina2017opinion}. These issues arise when the tails of a
sampling distribution are thick, which is where EE behave nicely
providing increasingly closer approximations. Conversely,
non-monotoniciy and values beyond $[0, 1]$ discussed in Section
\ref{sec:illustr} can occur in the thinner tails where traditional
first order approximation provides conservative inference and thus can
be reliably used. For practical applications, this EE tail behavior means 
that some kind of ``tail diagnostic'' would need to be performed in
order to determine a usable order of approximation for each side. In
fact, the ``irregularity'' can be approached with ``it's not a bug, it's a
feature'' attitude: if the sample is representative, AEE tail
diagnostic can provide information about sampling distribution, 
specifically on symmetry and tail thickness. Then, each subsequent
order can be guaranteed to be more conservative than the previous one
- for example, in the context of hypothesis testing, the null
hypothesis would be rejected with more certainty as the order
increases. Another issue to be considered when adapting AEE to data
analysis is that since the true central moments of the data generating
distribution are not known, they would be substituted with
estimates. As higher moments are more sensitive to the choice of
estimators (e.g. na\"ive biased vs unbiased), estimators' behavior and
its effect on the performance of higher-order inference would need to
be explored. \\ 

AEE for ordinary one-sample $t$-statistics (Section \ref{sec:short})
and their comparison with EE for a standardized mean capture some key
differences between standardized and studentized statistics and
underline important features of $t$-statistic's distribution. To get
some insight into these differences, we can turn to the Student's
$t$-distribution with $n - 1$ degrees of freedom, which was derived as
a distribution of a $t$-statistic for a sample of $n$ i.i.d. normally
distributed random variables. Its derivation relies on a specific
property unique to Gaussian distribution: independence of sample
mean and sample variance. Without normality, this is no longer
the case, which can be easily seen with asymmetric
distributions. Consider a distribution of $X$ that is skewed to the    
right, with the thin left and thick right tails. While the
distribution of standardized mean (scaled by a constant) is also
skewed to the right, the distribution of studentized mean (scaled
by a random variable) is, in contrast, skewed to the left (Fig
\ref{fig:compDistA}). The reason for the ``flip'' stems from the fact
that observations that contribute to a greater sample average, coming
from the thicker tail, have greater dispersion as well, thus resulting
in a smaller value for $t$-statistic. Moreover, as can be seen in Fig
\ref{fig:compDistB}, the difference between thicker and thinner tails
appears to be even more pronounced than that of a ratio with assumed
indepence (obtained with permutation/random pairings of averages and
standard errors from different samples). EE for studentized mean do
not assume independence of sample mean and sample variance; truncated
series approach the correct shape of the distribution (as seen in Figure
\ref{fig:terms}). \\
\begin{figure}
    \centering
    \begin{subfigure}[b]{0.49\textwidth} 
      \includegraphics[scale =
      0.46]{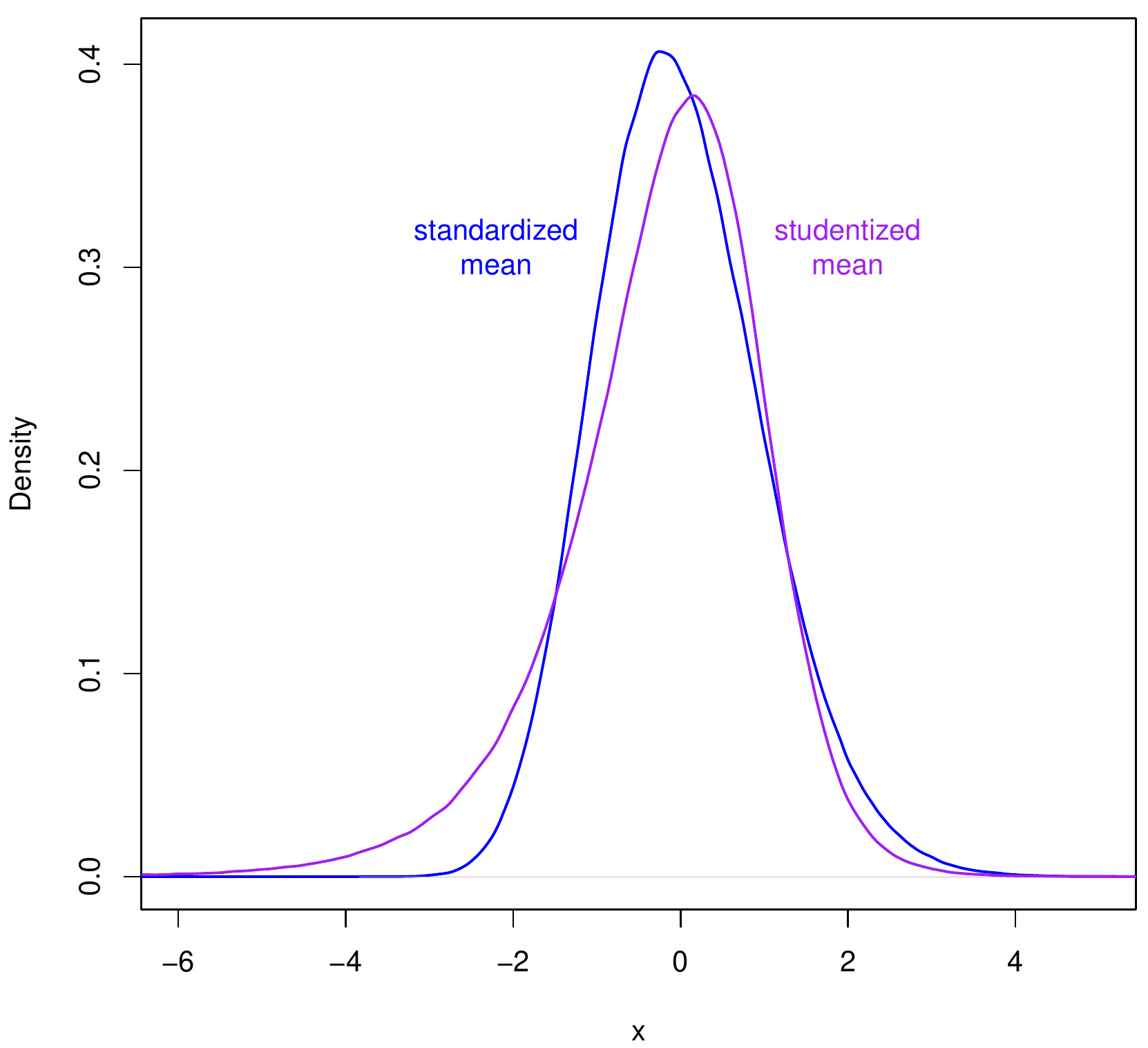} %
      \caption{Standardized mean $\bar{X}/s$ vs \\
                    studentized $\bar{X}/\sigma$} 
      \label{fig:compDistA}             
   \end{subfigure}%
   \begin{subfigure}[b]{0.49\textwidth}
      \includegraphics[scale = 0.46]{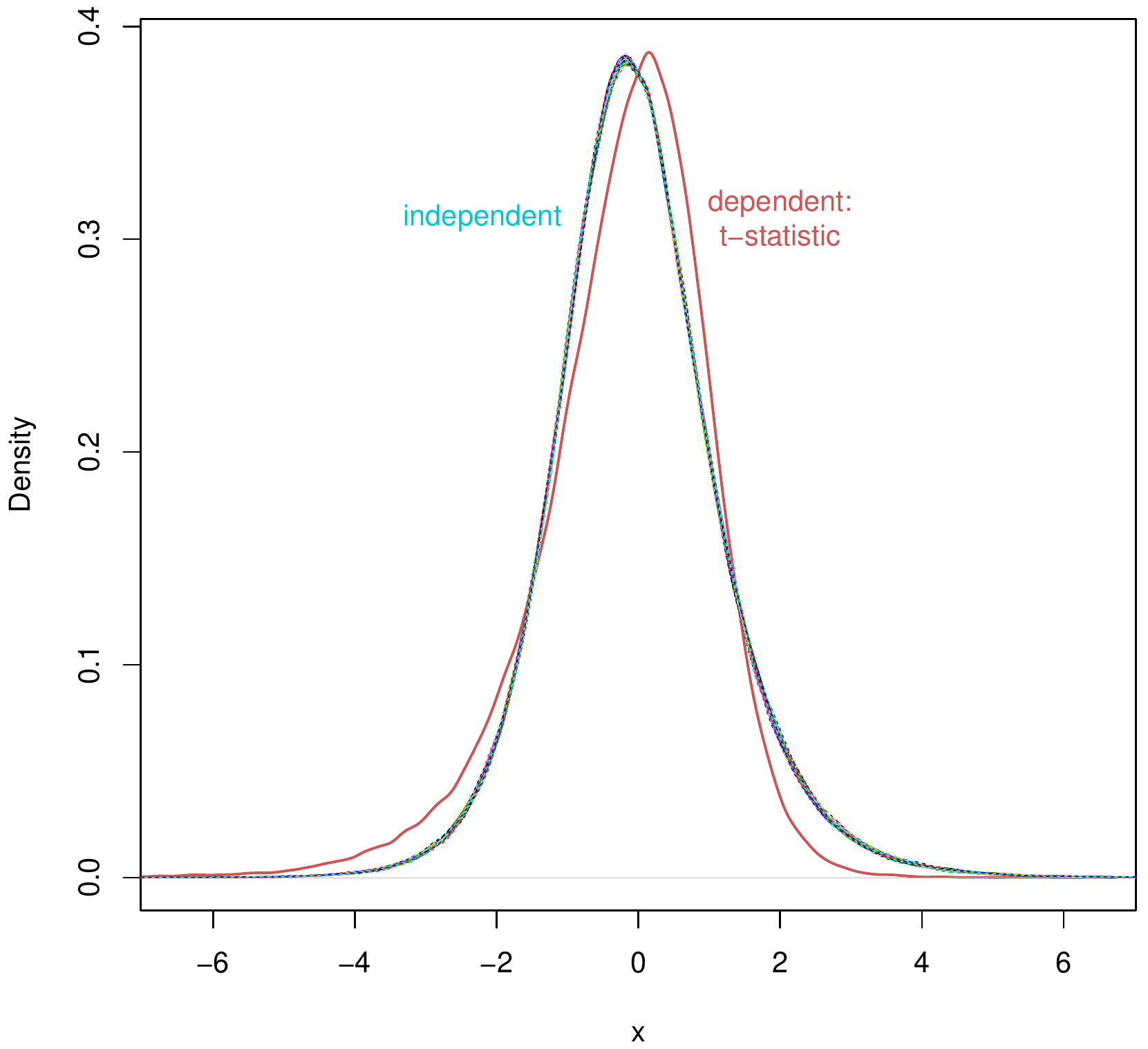} %
      \caption{$t$-statistic (dependent) vs \\
                    $\bar{X}/s$: $\bar{X} \perp s$}
     \label{fig:compDistB}%
  \end{subfigure}
  \caption{Distribution of scaled means, $X \sim \Gamma(3, 1) - 3$, $n
    = 10$.}   
\end{figure}

Another feature of these expansions in contrast with the ones for
standardized statistics is the cumulant order ``inconsistency'' inside
the polynomials for expansion terms. To see that, first consider a
standardized mean $\hat{\theta} = \sqrt{n}(\bar{X} - E(X))/\sigma$. For
cumulants $\kappa_{\hat{\theta}, j}$ of sampling distribution and
$\kappa_j$ of distribution of $X$, 
$\kappa_{\hat{\theta}, j} = n^{-\frac{j - 2}{2}} \kappa_j$ since
$\varphi_{\hat{\theta}}(t) = \big[\varphi (t/\sqrt{n}) \big]^n$,
  where $\varphi_{\hat{\theta}}$ and $\varphi$ are characteristic
  functions of $\hat{\theta}$ and $X$ respectively \cite{hall2013book}.
 The consequence of that is that standardized cumulants
  $\lambda_j$ are associated with $n^{-\frac{j - 2}{2}}$ and all the
  terms of EE polynomials respect that order - e.g. factors of the
  third term polynomial $q_3$ are $\lambda_5$, $\lambda_3 \lambda_4$,
  and $\lambda_3^3$. That cumulant relation is not true for a
  studentized mean, which is reflected in EE. Again, reference to
  normal distribution might provide some intuition for the effect of
  this difference. Consider $X \sim N(\mu, \sigma^2)$ and
$\hat{\theta} = \sqrt{n} (\bar{X} - \mu)/s \sim t_{n-1}$. Then
$\lambda_j = 0$, $j = 3, \dotsc$ and $q_1 = q_3 = \dotsb = 0$. Even
term polynomials ($q_2, q_4, \dotsc$) have remaining non-zero terms
that make the tails thicker, consistent with the fact that Student's
$t$-distribution has non-unit variance and thicker tails than
normal. For non-normal distributions, polynomial terms that 
contain cumulants but are not of a ``regular'' order are likely to also
contribute to thickness of the tails though it is harder to assess. \\

Student's $t$, while not a sampling distribution for any random
variable but normal (and not a limit distribution), can be useful in
exploring far tails of distributions of studentized mean-based
statistics and an effect of sample size on associated critical values
\cite{gerlovina2017opinion} in, for example, high-dimensional data
analysis with multiple comparisons. In fact, it is routinely used in
practice, with stated but not always warranted normality assumption to
justify its use; it can be argued that it still provides useful
approximation to sampling distribution for large deviations and small
sample size (\cite{zholud2014tail}). Combining higher-order inference
approach of AEE with $t$-distribution for challenging extreme tail
estimation could be another fruitful direction for achieving more
reliable inference. \\

\begin{appendix}

\section{Proofs of Propositions} \label{sec:proofProp}

\begin{proof}[Proof of Proposition \ref{prop:mom1}]
 \begin{flalign}  \label{eq:thetamt1}
  &\hat{\theta}^m = n^{\frac{m}{2}} A^{-\frac{m}{2}} \bar{X}^m \, ( 1 + \gamma_1 -
    \gamma_2 )^{-\frac{m}{2}} = n^{\frac{m}{2}} A^{-\frac{m}{2}} \bar{X}^m 
   \left[ 1 + \sum_{k=1}^\infty a_{m,k} (\gamma_1 - \gamma_2)^k
   \right], &
\end{flalign}
where $\gamma_1 = A^{-1}B \bar{X}_s = \mathcal{O} \left(n^{-1/2}
\right)$, $\gamma_2 = A^{-1} B \bar{X}^2 = \mathcal{O} \left(n^{-1}
\right)$, and $a_{m, k}$ as defined in \eqref{eq:amk}. \\

From Taylor expansion of $(1 + \gamma_1 - \gamma_2)^{-\frac{m}{2}}$
and, subsequently, from $(\gamma_1 - \gamma_2)^k$ \eqref{eq:thetamt1}
we only need the terms with factors up to $n^{-\frac{K}{2}}$. Knowing
the orders of $\gamma_1$ and $\gamma_2$ does not only allow us to use
Taylor expansion in the first place, it also provides a tool to keep
only the relevant terms of the expansion. \\ 

Start with grouping the terms by orders (powers of $n^{-\frac{1}{2}}$):
\begin{alignat*}{2} 
 {\left(1 + \gamma_1 - \gamma_2\right)}^{-\frac{m}{2}} 
   &= && \, 1 + \sum_{k = 1}^{\infty} a_{m, k} \sum_{i = 0}^k {k \choose
     i} (-1)^i \, \gamma_1^{k - i} \, \gamma_2^i \notag \\
   & = && \, \Bigg( 1 + \sum_{k=1}^{\infty} \Bigg[ a_{m,k} {k \choose 0} 
           \gamma_1^k \, \gamma_2^0 - a_{m, k-1} {k - 1 \choose 1}
           \gamma_1^{k-2} \, \gamma_2^1 + a_{m, k-2} {k - 2 \choose 2}
           \gamma_1^{k-4} \, \gamma_2^2 \nonumber  \\[2mm]  
  &   && \; \; \; \; - \dotsb + 
   \begin{cases} 
    \, a_{m, \frac{k}{2}} (-1)^{\frac{k}{2}} \, \displaystyle{{\frac{k}{2} \choose
        \frac{k}{2}}} \gamma_1^0 \, \gamma_2^{\frac{k}{2}} \, \Bigg]
    \Bigg) &\text{ - for even } k \\[1.5em] 
    \, a_{m, \frac{k+1}{2}} (-1)^{\frac{k-1}{2}} \displaystyle{{\frac{k+1}{2} \choose
        \frac{k-1}{2}}} \gamma_1^1 \,
    \gamma_2^{\frac{k-1}{2}} \, \Bigg] \Bigg) &\text{ - for odd } k
   \end{cases} \nonumber \\[2mm]
  & = && \, 1 + \sum_{k = 1}^{\infty} \sum_{i =
    0}^{\left\lfloor\frac{k}{2}\right\rfloor} a_{m, k - i} (-1)^i {k - i
    \choose i} \gamma_1^{k - 2i} \, \gamma_2^i  
\end{alignat*}

From this, we can pick $K$ terms and get
\begin{flalign*} 
 \hat{\theta}^m &= \, n^{\frac{m}{2}} A^{-\frac{m}{2}} \bar{X}^m \left[
   \, 1 + \sum_{k=1}^K \sum_{i=0}^{\left\lfloor \frac{k}{2} \right\rfloor} a_{m, k-i}
   (-1)^i {k-i \choose i} \gamma_1^{k-2i} \, \gamma_2^i \, \right] +
 \mathcal{O} \left( n^{-\frac{K + 1}{2}} \right) & 
\end{flalign*}

Then
\begin{equation*}
 E \left( \hat{\theta}^m \right) = n^{\frac{m}{2}} A^{-\frac{m}{2}} \Bigg[ \,
   \rho(m, 0) + \sum_{k=1}^K \sum_{i=0}^{\left\lfloor
       \frac{k}{2} \right\rfloor} a_{m, k-i} (-1)^i {k-i \choose i}
   \, \frac{B^{k-i}}{A^{k-i}} \, \rho(m+2i, k-2i) \, \Bigg] +
   \mathcal{O} \left( n^{-\frac{K + 1}{2}} \right)  
\end{equation*}
\end{proof}

\vspace{1em}

\begin{proof}[Proof of Proposition \ref{prop:mom2}]
Applying the same argument for truncation and leaving only terms of
relevant orders as in the one-sample case, we get the expression:
\begin{equation} \label{eq:thetamt2}
 \hat{\theta}^m = n^{\frac{m}{2}} A^{-\frac{m}{2}} (\bar{X} - \bar{Y})^m \left[ 1
   + \sum_{k=1}^K \sum_{i=0}^{\left\lfloor \frac{k}{2} \right\rfloor}
    (-1)^i a_{m, k-i} {k-i \choose i} \gamma_1^{k-2i} \, \gamma_2^i
  \right] +  \mathcal{O} \left( n^{-\frac{K + 1}{2}} \right)  
\end{equation}
where $\gamma_1 = A^{-1}(B_x\bar{X}_s + B_y\bar{Y}_s)$, $\gamma_2 =
A^{-1} (B_x\bar{X}^2 + B_y\bar{Y}^2)$, and $a_{m, k}$ is the same as 
in \eqref{eq:amk}. It is straightforward to show that $\gamma_1 =
\mathcal{O}(n^{-\frac{1}{2}})$ and $\gamma_2 = \mathcal{O}(n^{-1})$.\\

For the expectation, we need to expand $\gamma_1^k \, \gamma_2^l$:
\begin{equation*}
 \gamma_1^k \, \gamma_2^l = \frac{1}{A^{k + l}} \sum_{i=0}^k
 \sum_{j=0}^l {k \choose i} {l \choose j} B_x^{(k + l) - (i + j)}
 B_y^{i + j} \bar{X}^{2(l - j)} \bar{X}_s^{k - i} \, \bar{Y}^{2j} \, \bar{Y}_s^i. 
\end{equation*}
\begin{alignat*}{3} 
 E \left(\hat{\theta}^m \right) = & \, n^{\frac{m}{2}} A^{-\frac{m}{2}}
 \sum_{j=0}^m (-1)^j {m \choose j} \bigg[ \rho(m-j, 0) \, \tau(j, 0)
 + \sum_{k=1}^K \sum_{i=0}^{\left\lfloor \frac{k}{2} \right\rfloor} 
 (-1)^i a_{m, k - i} {k-i \choose i} A^{i-k} \notag \\
 & \phantom{+} \; \times \sum_{u=0}^{k-2i} \sum_{v=0}^i {k - 2i
   \choose u} \! {i \choose v} B_x^{(k - i) - (u + v)} B_y^{u + v} 
 \rho\big(m-j+2(i-v), k-2i-u\big) \, \tau(j+2v, u) \bigg] \notag \\
 & +  \mathcal{O} \left( n^{-\frac{K + 1}{2}} \right)  
\end{alignat*}
\end{proof}

\section{Proofs of Theorems} \label{sec:proofThm}

\begin{proof}[Proof of Theorem \ref{thm:AEE1}]
  We can write $\hat{\theta}_n$ in the following way:
\begin{equation*}
 \hat{\theta}_n = n^{\frac{1}{2}} \bar{X} \, A^{-\frac{1}{2}} \, (1 +
 b\gamma)^{-\frac{1}{2}}, \text{ where} 
\end{equation*}
 $b = B/A$, $\gamma = \bar{X}_s - \bar{X}^2$, $\bar{X} = n^{-1}
 \sum_{i = 1}^n X_i$, and $\bar{X}_s = n^{-1}\sum_{i = 1}^n X_i^2 -
 \mu_2$. 
\begin{equation*}
 \hat{\theta}_n^m = n^{\frac{m}{2}} \bar{X}^m A^{-\frac{m}{2}} (1 + b
  \gamma)^{-\frac{m}{2}} =   n^{\frac{m}{2}} A^{-\frac{m}{2}} \sum_{k
    = 0}^{\infty} b^k a_{m, k} \sum_{i = 0}^k {k \choose i} (-1)^i
  \bar{X}_s^{k - i} \bar{X}^{2i + m}, 
\end{equation*}
where $a_{m, k}$ is the same as in \eqref{eq:amk}. Taking expectation,
we obtain
\begin{equation*}
 \mu_{\hat{\theta}, m} = E \left(\hat{\theta}_n^m \right) = n^{\frac{m}{2}}
  A^{-\frac{m}{2}} \sum_{k = 0}^{\infty} b^k a_{m, k} \sum_{i = 0}^k
  {k \choose i} (-1)^i \rho(2i + m, k - i).
\end{equation*}
It can be shown that $\rho(u, w) = \sum_{v = \left\lceil\frac{u +
      w}{2} \right\rceil}^{u + w - 1} \frac{1}{n^v} \beta(u, w, v)$, 
where $\beta(u, w, v)$ does not depend on $n$ and only depends on
moments of $X$. Then
\begin{equation*}
 \mu_{\hat{\theta}, m} = n^{\frac{m}{2}} A^{-\frac{m}{2}} \sum_{k = 0}^{\infty} b^k a_{m, k}
    \sum_{i = 0}^k {k \choose i} (-1)^i   \sum_{v = \left\lceil
    \frac{m + k + i}{2} \right \rceil}^{m + k + i - 1} \frac{1}{n^v}
    \beta(2i + m, k - i, v).  
\end{equation*}

Switch the order of summation, summing over $v$ first and over 
$k$ second:
\begin{equation*}
 \mu_{\hat{\theta}, m} = A^{-\frac{m}{2}} n^{-\frac{1}{2} \delta(m)} \sum_{v
   = 0}^{\infty} \frac{1}{n^v} \sum_{k = k_1}^{k_2} b^k \tilde{g}(k,
 v; \, m),
  \end{equation*}
where $\delta(m) = m \mod 2$, $k_1 = max\left(0, \left \lceil \frac{1}{2}
  \left(v - \left \lfloor \frac{m}{2} \right \rfloor + 1\right)
\right \rceil \right)$, $k_2 = 2v + \delta(m)$, $\tilde{g}(k, v; \, m) = a_{m,
k} \sum_{i = i_1}^{i_2} {k \choose i} (-1)^i \beta\big(2i + m, k - i,
v + \left \lceil \frac{m}{2} \right \rceil \big)$ with $i_1 =
max\big(0, v - \left\lfloor \frac{m}{2} \right\rfloor + 1 - k\big)$ and
$i_2 = min(k, 2v + \delta(m) - k)$; $\tilde{g}(k, v; \, m)$ does not depend on
$n$. If $k_1 > 0$, we can sum over $k$ starting from $k = 0$ and set
$\tilde{g}(k, v; \, m) = 0$ if $0 \leqslant k < k_1$.  \\

As we only need a finite number of terms for the expansions, we
consider the finite sum in the moments as well:
\begin{equation*}
 \mu_{\hat{\theta}, m} 
  = A^{-\frac{m}{2}} n^{-\frac{1}{2} \delta(m)} \left[\sum_{v = 0}^{V}
    \frac{1}{n^v} \sum_{k = 0}^{2v + \delta(m)} b^k \, \tilde{g}(k, v;
    \, m) + \mathcal{O} \left(n^{-(V + 1)} \right) \right].
 \end{equation*}

Next we look at the moment products. Using an induction-like argument,
we can show that 
\begin{align} \label{eq:momprod}
  \mu_{\hat{\theta}, m_1}^{l_1}\mu_{\hat{\theta}, m_2}^{l_2} \dotsm \mu_{\hat{\theta},
    m_d}^{l_d}= & \, 
  A^{-\frac{1}{2} \sum_{i = 1}^d l_i m_i} n^{-\frac{1}{2} \sum_{i =
      1}^d l_i \delta(m_i)} \notag \\
   & \times \left[ \sum_{v = 0}^{V} \frac{1}{n^v}
          \sum_{k = 0}^{2v + \sum_{i = 1}^d l_i\delta(m_i)} b^k
          g(k, v; \, \boldsymbol{m}, \boldsymbol{l}) 
  + \mathcal{O} \left(n^{-(V + 1)}\right) \right], 
\end{align}
where $\boldsymbol{m} = (m_1, \dotsc, m_d)$, $\boldsymbol{l} = (l_1,
\dotsc, l_d)$, and $g(k, v; \, \boldsymbol{m}, \boldsymbol{l})$ does
not depend on $n$. Indeed, the base case is $\boldsymbol{m} = (m)$,
$\boldsymbol{l} = (1)$; then $g(k, v; \, \boldsymbol{m},
\boldsymbol{l}) = \tilde{g}(k, v; \, m)$ does not depend on $n$. Next,
consider $\boldsymbol{m} = (m_1, \dotsc, m_d)$, $\boldsymbol{l} = (l_1,
\dotsc, l_d)$, $\boldsymbol{k} = (k_1, \dotsc, k_g)$, $\boldsymbol{j}
= (j_1, \dotsc, j_g)$, and $g(k, v; \, \boldsymbol{m},
\boldsymbol{l})$, $g(k, v; \, \boldsymbol{k}, \boldsymbol{j})$ that do
not depend on $n$ and find $\mu_{\hat{\theta}, m_1}^{l_1} \dotsm \mu_{\hat{\theta},
  m_d}^{l_d} \mu_{\hat{\theta}, k_1}^{j_1} \dotsm \mu_{\hat{\theta},
  k_g}^{j_g}$. By simple multiplication, gathering the terms by powers
of $1/n$, and adding a finite number of resulting higher-order terms
to $\mathcal{O} \left(n^{-(V + 1)} \right)$, we get
\begin{align*} 
  \mu_{\hat{\theta}, m_1}^{l_1} \dotsm \mu_{\hat{\theta}, m_d}^{l_d}\mu_{\hat{\theta},
  k_1}^{j_1} \dotsm \mu_{\hat{\theta}, k_g}^{j_g}= & \, A^{-\frac{1}{2}
                  \left(\sum_{i = 1}^d l_i m_i + \sum_{i = 1}^g j_i k_i\right)}
 n^{-\frac{1}{2} \left(\sum_{i = 1}^d l_i \delta(m_i) + \sum_{i = 1}^g
                                               j_i \delta(k_i) \right)} \notag \\
   & \times \left[ \sum_{v = 0}^{V} \frac{1}{n^v}
          \sum_{k = 0}^{2v + \sum_{i = 1}^d l_i\delta(m_i) + \sum_{i =
     1}^g j_i\delta(k_i)} b^k g(k, v; \, \boldsymbol{m_s}, \boldsymbol{l_s}) 
  + \mathcal{O} \left(n^{-(V + 1)}\right) \right], 
\end{align*}
where $\boldsymbol{m_s} = (m_1, \dotsc, m_d, k_1, \dotsc, k_g)$,
$\boldsymbol{l_s} = (l_1, \dotsc, l_d, j_1, \dotsc, j_g)$, and
\begin{equation*}
  g(k, v; \, \boldsymbol{m_s}, \boldsymbol{l_s}) =
  \sum_{u = 0}^v \sum_{l = 0}^{min\left[k, 2u + \sum_{i = 1}^d l_i
      \delta(m_i) \right]} g(l, u; \, \boldsymbol{m}, \boldsymbol{l})
  g(k - l, v - u; \, \boldsymbol{k}, \boldsymbol{j}).
\end{equation*}
Thus $g(k, v; \, \boldsymbol{m_s}, \boldsymbol{l_s})$ does not depend on
$n$ and equation \eqref{eq:momprod} is true. \\ 

The next step is obtaining expressions for cumulants. Let $M$ be the
cumulant's order. The cumulant $\kappa_{\hat{\theta}, M}$ is written as a
sum of moment products with their respective coefficients: 
\begin{equation*}
 \kappa_{\hat{\theta}, M} = \sum_{j = 1}^{J_M} C(M,j) \mu_{\hat{\theta}, 1}^{l(M,
   j, 1)} \mu_{\hat{\theta}, 2}^{l(M, j, 2)} \dotsm \mu_{\hat{\theta}, M}^{l(M, j,
   M)}
\end{equation*}
 with a condition $\sum_{m = 1}^M m \, l(M, j, m) = M$ for all
 $j$; $l(M, j, m)$ are non-negative integers. Plugging in the
 expressions for products of moments, we get 
\begin{align*}
 \kappa_{\hat{\theta}, M} =& \sum_{j = 1}^{J_M} C(M, j) A^{-\frac{1}{2}
   \sum_{m = 1}^M m \, l(M, j, m)} n^{-\frac{1}{2} \sum_{m = 1}^M
   \delta(m) l(M, j, m)} \notag \\
 & \times \left[\sum_{v = 0}^{V} \frac{1}{n^v} \sum_{k = 0}^{2v +
   \sum_{m = 1}^M \delta(m) l(M, j, m)} b^k g(k, v; \, \boldsymbol{m},
   \boldsymbol{l}) + \mathcal{O} \left(n^{-(V + 1)} \right) \right],
\end{align*}
where $\boldsymbol{m} = (1, \dotsc, M)$ and $\boldsymbol{l} = (l(M, j,
1), \dotsc, l(M, j, M))$. It can be shown that
$\sum_{m = 1}^M \delta(m) \, l(M, j, m)$ can be expressed as
$\delta(M) + 2L_1(M, j)$, where $L_1(M, j)$ is a non-negative
integer. Then   
\begin{align} \label{eq:proof01}
 \kappa_{\hat{\theta}, M} =& \sum_{j = 1}^{J_M} C(M, j) A^{-\frac{M}{2}}
                      n^{-\frac{1}{2} \delta(M)} \notag \\
 & \times \left[\sum_{v = 0}^{V} \frac{1}{n^{v + L_1(M, j)}} \sum_{k = 0}^{2v +
   2L_1(M, j) + \delta(M)} b^k g(k, v; \, \boldsymbol{m},
   \boldsymbol{l}) + \mathcal{O} \left(n^{-(V + 1)} \right) \right].
\end{align}
Let $w = v + L_1(M, j)$. Then we can write:
\begin{align} \label{eq:proof02}
 \kappa_{\hat{\theta}, M} =& \sum_{j = 1}^{J_M} C(M, j) A^{-\frac{M}{2}}
                      n^{-\frac{1}{2} \delta(M)} \notag \\
 & \times \left[\sum_{w = 0}^{V} \frac{1}{n^{w}} \sum_{k = 0}^{2w +
   \delta(M)} b^k g(k, w - L_1(M, j); \, \boldsymbol{m},
   \boldsymbol{l}) + \mathcal{O} \left(n^{-(V + 1)} \right) \right].
\end{align}
Note that if $v + L_1(M, j) > V$, some terms $v + L_1(M, j)$ are
included in $\mathcal{O} \left(n^{-(V + 1)} \right)$ in equation
\eqref{eq:proof02}, and therefore $\mathcal{O} \left(n^{-(V + 1)}
\right)$ in \eqref{eq:proof02} has additional terms compared to
that of equation \eqref{eq:proof01}. \\

Now, changing the order of summation,
\begin{align*} 
 \kappa_{\hat{\theta}, M} =& A^{-\frac{M}{2}} \, n^{-\frac{1}{2} \delta(M)} \notag \\
 & \times \left[\sum_{w = 0}^{V} \frac{1}{n^{w}} \sum_{k = 0}^{2w +
   \delta(M)} b^k \sum_{j = 1}^{J_M} C(M, j) g(k, w - L_1(M, j); \,
   \boldsymbol{m}, \boldsymbol{l}) + \mathcal{O}
   \left(n^{-(V + 1)} \right) \right].
\end{align*}

Let $G(M, k, w)$ denote $\sum_{j = 1}^{J_M} C(M, j) g(k, w - L_1(M,
j); \, \boldsymbol{m}, \boldsymbol{l} \Big)$, which does not
   depend on $n$. Then
\begin{equation} \label{eq:woutn}
 \kappa_{\hat{\theta}, M} = n^{-\frac{1}{2} \delta(M)} 
 \left[\sum_{w= \left\lfloor \frac{M}{2} \right\rfloor - 1 }^{V}
   \frac{1}{n^{w}} \sum_{k = 0}^{2w + \delta(M)} A^{-\frac{M}{2}} b^k \,
  G(M, k, w) + \mathcal{O} \left(n^{-(V + 1)} \right) \right].
\end{equation}
Note that summation over $w$ starts with $w = \left\lfloor \frac{M}{2}
\right\rfloor - 1$ and not with $w = 0$ (see \cite{hall2013book} Theorem 2.1). \\

Now we turn to the case where $A$ and $b$ depend on $n$. 
Let $b^k \, A^{-\frac{M}{2}} = \sum_{i = 0}^{\infty} \frac{1}{n^i} \,
t(M, k, i) = \sum_{i = 0}^V \frac{1}{n^i} \, t(M, k, i) + \mathcal{O}
\left(n^{-(V + 1)} \right)$, where $t(M, k, i)$ is of the order
$\mathcal{O}(1)$ and does not depend on $n$.
\begin{equation*}
 \sum_{k = 0}^{2w + \delta(M)} A^{-\frac{M}{2}} \, b^k \, G(M,
  k, w) = \sum_{i = 0}^V \frac{1}{n^i} \sum_{k = 0}^{2w + \delta(M)}
  G(M, k, w) \, t(M, k, i) + \mathcal{O} \left(n^{-(V +
    1)} \right)
\end{equation*}
Then
\begin{equation*}
  \kappa_{\hat{\theta}, M} = n^{-\frac{1}{2} \delta(M)}
    \left[ \sum_{u= \left\lfloor \frac{M}{2} \right\rfloor - 1 }^{V}
    \frac{1}{n^u} \, \tilde{t}(M, u) +  \mathcal{O} \left(n^{-(V +
    1)}\right) \right],
\end{equation*}
where $\tilde{t}(M, u) = \sum_{i = 0}^{u - \left\lfloor \frac{M}{2}
  \right\rfloor + 1} \sum_{k = 0}^{2u - 2i + \delta(M)} G(M,
k, u - i) \, t(M,k, i)$ does not depend on $n$. \\

Thus,
\begin{equation} \label{eq:withn}
 \kappa_{\hat{\theta}, M} = n^{-\frac{M - 2}{2}} \sum_{u = 0}^{V -
   \left\lfloor \frac{M}{2} \right\rfloor + 1} \frac{1}{n^u} \, 
   \tilde{t}\Big(M, u + \left\lfloor \frac{M}{2} \right\rfloor - 1
   \Big) + \mathcal{O} \left( n^{-\big(V + 1 + \frac{1}{2} \delta(M)
   \big)} \right)
\end{equation}
This expression corresponds to the equation $(2.20)$, Chapter $2.2$ in
\cite{hall2013book}: 
\begin{equation*}
  \kappa_{\hat{\theta}, M} = n^{-\frac{M - 2}{2}} \Big(k_{M, 0} +
  \frac{1}{n} k_{M, 1} + \frac{1}{n^2} k_{M, 2} + \dotsb \Big), 
\end{equation*}
where it is shown that in this case Edgeworth expansion is valid. \\

Finite-term difference between representations of $\kappa_{\hat{\theta}, M}$
in equations \eqref{eq:woutn} and \eqref{eq:withn} is
$\mathcal{O} \left(n^{-(V + 1) - \frac{1}{2} \delta(M)} \right)$; it
is easy to see that $V = \left\lfloor \frac{K}{2} \right\rfloor$ and
$V + \frac{1}{2}\delta(K) = \frac{K}{2}$. The difference of
$\mathcal{O}\big(n^{-(\frac{K}{2} + 1)}\big)$ in cumulants translates into
    the difference of $\mathcal{O}\big(n^{-\frac{K + 1}{2}} \big)$ between
    corresponding K-term Edgeworth expansions, and thus the
    expansion is also valid when $A$ and $b$ depend on $n$. \\
\end{proof}

\begin{proof}[Proof of Theorem \ref{thm:AEE2}]
  Comparing equations \eqref{eq:thetamt1} and \eqref{eq:thetamt2} from
  the proofs of Propositions 1 and 2, we can see that the general
  moment structures of generalized one- and two-sample $t$-statistics
  are similar, including the order of $\gamma_1$ and
  $\gamma_2$. Therefore the proof of Theorem 2 follows the same steps
  as the proof of Theorem 1. 
\end{proof}

\end{appendix}

 \section*{Acknowledgements}
 We are deeply grateful to Boris Gerlovin for his invaluable ideas,
 suggestions, and our productive discussions. \\
 
 This project was supported by the National Institute of Environmental
 Health Sciences [P42ES004705] Superfund Research Program at UC
 Berkeley. 

\bibliographystyle{ieeetr}
\bibliography{EdgeTheory}

\end{document}